\documentclass[12pt]{article}
\usepackage{BOONDOX-cal}
\usepackage{amssymb,a4}
\usepackage{amsmath,amsfonts,amssymb,amsthm, mathrsfs}
\usepackage{xcolor}

\allowdisplaybreaks[4]
\newtheorem{thm}{Theorem}[section]
\newtheorem{lem}[thm]{Lemma}
\newtheorem{defi}[thm]{Definition}
\newtheorem{prop}[thm]{Proposition}
\newtheorem{rem}[thm]{Remark}
\newtheorem{cor}[thm]{Corollary}

\newcommand{\fg}{\mathfrak{g}}
\newcommand{\fh}{\mathfrak{h}}
\newcommand{\fht}{\widehat{\mathfrak{h}}}
\newcommand{\fgt}{\widehat{\mathfrak{g}}}

\newcommand{\Ext}{\mathrm{Ext}}
\newcommand{\End}{\mathrm{End}}
\newcommand{\Ker}{\mbox{Ker}}
\newcommand{\C}{\mathbb{C}}
\newcommand{\Z}{\mathbb{Z}}
\newcommand{\Oinf}{\mathcal{O}^{\mathrm{inf}}}
\newcommand{\rh}{\widehat{\rho}}
\newcommand{\Hom}{\mathrm{Hom}}
\newcommand{\Ic}{{I_V}}
\newcommand{\KL}{{KL}}
\newcommand{\im}{\mbox{Im}}
\begin{document}

\begin{center}
{\Large\bf Ordinary modules for affine vertex operator superalgebras}
\end{center}

\begin{center}
{Huaimin Li, Qing Wang\\
School of Mathematical Sciences, Xiamen University, Xiamen, China 361005}	
\end{center}

\begin{abstract}
Let $\mathfrak{g}$ be a basic classical Lie superalgebra and
let $\widehat{\mathfrak{g}}$ be the corresponding affine Lie superalgebra.
In this paper, we first prove that a Cartan subalgebra acts semisimply on ordinary modules for the simple affine vertex operator superalgebra $L_{\widehat{\mathfrak{g}}}(k,0)$ at boundary admissible level $k$.
Then we prove that the category $\mathcal{O}_{k}^{ord}(\fg)$ of ordinary $L_{\widehat{\mathfrak{g}}}(k,0)$-modules is finite, semisimple and
$\mathcal{O}_{k}^{ord}(\fg)$ is exactly the category $\KL_k(\fg)$ of finite-length generalized modules for the affine vertex operator superalgebra $L_{\widehat{\mathfrak{g}}}(k,0)$.
Thus $\mathcal{O}_{k}^{ord}(\fg)$ is a braided tensor supercategory.
Furthermore, we obtain the rigidity of the supercategory $\mathcal{O}_{k}^{ord}(\fg)$
and thus it is a ribbon supercategory.
Finally, we conclude that $\mathcal{O}_{k}^{ord}(\fg)$ is a ribbon fusion supercategory.
\end{abstract}

\section{Introduction}

Huang and Lepowsky developed the tensor product theory of the module category for vertex operator algebra in \cite{HL95-1,HL95-2,HL95-3,H95}.
Furthermore, Huang proved that the category of modules for vertex operator algebra ${V}$ is a modular tensor category if
${V}$ is of CFT type, simple, self-dual, rational and $C_2$-cofinite in \cite{H08}.
Thus the category of modules for simple affine vertex operator algebra $L_{\widehat{\mathfrak{g}}}(k,0)$ at positive integer level $k$
is a modular tensor category.
If the level $k$ is not a positive integer, $L_{\widehat{\mathfrak{g}}}(k,0)$ is not $C_2$-cofinite \cite{DM06,Z96}.
Level $k$ is called an admissible level if simple affine vertex operator superalgebra $L_{\widehat{\mathfrak{g}}}(k,0)$ is an admissible $\widehat{\mathfrak{g}}$-module,
where admissible modules were first studied by Kac and Wakimoto to explore the modular invariant representations of affine Lie (super)algebras in \cite{KW88}.
For $\mathfrak{g}=sl_2$ and admissible level $k$,
Adamovi\'{c} and Milas proved that
$L_{\widehat{\mathfrak{g}}}(k,0)$ is rational in the category $\mathcal{O}$ and presented a conjecture for arbitrary affine Lie algebras in \cite{AM95}.
This conjecture was proved by Arakawa in \cite{Ar16} and
 extended to the only non-isotropic basic Lie superalgebra $\mathfrak{g}=osp(1|2n)$ by Gorelik and Serganova in \cite{GS22}.
On the other hand,
Creutzig, Huang and Yang proved that the ordinary module category of $L_{\widehat{\mathfrak{g}}}(k,0)$
is a braided tensor category for any simple finite-dimensional Lie algebra $\mathfrak{g}$ at admissible level $k$ in \cite{CHY18}.
In \cite{C19}, Creutzig proved that the ordinary module category of $L_{\widehat{\mathfrak{g}}}(k,0)$ associated to any simply laced Lie algebra $\mathfrak{g}$ at admissible level $k$ is rigid and thus a braided fusion category.
By using the theory of vertex superalgebra extensions \cite{CKM},
Creutzig, Genra and Linshaw proved that the ordinary module category of $L_{\widehat{osp(1|2n)}}(k,0)$ at admissible level $k$ is a fusion supercategory in \cite{CGL24}.

While besides the simple finite-dimensional Lie algebras and $osp(1|2n)$,
modular invariance also occurs in boundary level admissible modules when $\fg$ is the other basic classical Lie superalgebra \cite{GK15,KW17}.
As a representative,
we studied the simple affine vertex operator superalgebra $L_{\widehat{sl(2|1)}}(k,0)$ at boundary admissible level in \cite{LW25}.
More specifically, we proved that $L_{\widehat{sl(2|1)}}(-\frac{1}{2},0)$ is rational in the category $\mathcal{O}$ at the boundary admissible level $-\frac{1}{2}$,
while at the non-boundary admissible level $\frac{1}{2}$ we classified its infinitely many irreducible weak modules in the category $\mathcal{O}$,
and we gave a conjecture that for any basic classical Lie superalgebra $\mathfrak{g}$,
$L_{\widehat{\mathfrak{g}}}(k,0)$ is rational in the category $\mathcal{O}$ at boundary admissible level $k$ and the irreducible weak modules in the category $\mathcal{O}$ are exactly the admissible modules of level $k$ for $\widehat{\mathfrak{g}}$.
In \cite{GK25}, Gorelik and Kac proved that $L_{\widehat{\mathfrak{g}}}(k,0)$ is rational in the category $\mathcal{O}$
for any basic classical Lie superalgebra except $D(2|1;a)$ at boundary admissible level $k$.
Based on their results, we classified the irreducible ordinary modules of $L_{\widehat{\mathfrak{g}}}(k,0)$
at boundary admissible level $k=\frac{h^{\vee}}{u}-h^{\vee}$,
we proved that if $\mathfrak{g}$ is a basic classical Lie superalgebra of type I,
$L_{\widehat{\mathfrak{g}}}(k,0)$ has exactly $u$ pairwise inequivalent irreducible ordinary modules $L(\widehat\lambda_p)$, $p=0,1,\dots,u-1$,
if $\mathfrak{g}$ is a finite-dimensional simple Lie algebra or a basic classical Lie superalgebra of type II except $D(2|1;a)$,
$L_{\widehat{\mathfrak{g}}}(k,0)$ itself is the only irreducible ordinary $L_{\widehat{\mathfrak{g}}}(k,0)$-module \cite{LW26}.

Let $\mathfrak{g}=\mathfrak{g}_{\bar{0}}\oplus\mathfrak{g}_{\bar{1}}$ be a basic classical Lie superalgebra and let $\fh$ be a Cartan subalgebra of $\fg$,
we fix simple roots of $\fg$ to be the distinguished simple roots.
In this paper, we study the category of ordinary $L_{\widehat{\mathfrak{g}}}(k,0)$-modules and denote it by $\mathcal{O}_{k}^{ord}(\fg)$,
where $L_{\widehat{\mathfrak{g}}}(k,0)$ is the simple affine vertex operator superalgebra at boundary admissible level $k$.
We first prove that $\fh(0)$ acts semisimply on ordinary $L_{\widehat{\mathfrak{g}}}(k,0)$-modules.
If $\mathfrak{g}$ is a basic classical Lie superalgebra of type II
or a finite-dimensional simple Lie algebra, since the even part $\fg_{\bar{0}}$ is semisimple,
we obtain that $\fh(0)$ acts semisimply on ordinary $L_{\widehat{\mathfrak{g}}}(k,0)$-modules immediately by Weyl's completely reducibility theorem.
While for the type I case, note that $\widehat{psl(n|n)}$ does not admit boundary admissible levels,
we assume $\mathfrak{g}\ne psl(n|n)$ in this paper,
then $\fg_{\bar{0}}=\fg_{\bar{0}}^{\mathrm{ss}}\oplus\C z$, where $\fg_{\bar{0}}^{\mathrm{ss}}=[\fg_{\bar{0}},\fg_{\bar{0}}]$ is semisimple and $\C z=Z(\fg_{\bar{0}})$ is the center,
the main difficulty is to prove that $z(0)$ acts semisimply on ordinary $L_{\widehat{\mathfrak{g}}}(k,0)$-modules.
Let $z(0)=z_s(0)+z_n(0)$ be the Jordan-Chevalley decomposition on an ordinary $L_{\widehat{\mathfrak{g}}}(k,0)$-module,
we first prove that the nilpotent part $z_n(0)$ is a $L_{\widehat{\mathfrak{g}}}(k,0)$-module endomorphism.
Suppose $z_n(0)$ is non-zero,
we show that there exists a self-extension
$0\longrightarrow L(\widehat\lambda_p)\longrightarrow E\longrightarrow L(\widehat\lambda_p)\longrightarrow0$
in $\mathcal{O}_{k}^{ord}(\fg)$ such that $z_n(0)$ on $E$ is non-zero.
From Subsection 1.6 of \cite{GK11},
there exists an injective map
$\Upsilon_{\widehat{\lambda}_p}:\Ext^1_{\fgt}\big(L(\widehat{\lambda}_p),L(\widehat{\lambda}_p)\big)\longrightarrow\fht^*$,
and set $\Upsilon_{\widehat{\lambda}_p}(E)=\nu$.
Let $\gamma_p=\alpha_{odd}+p\delta$,
where $\alpha_{odd}$ is the odd isotropic simple root of $\mathfrak{g}$ and $\delta$ is the imaginary root of $\widehat{\mathfrak{g}}$,
we can prove $\nu$ is orthogonal to $\gamma_p$.
On the other hand we have $(\nu,\gamma_p)\neq0$ by a direct calculation,
thus a contradiction.
Hence $z(0)=z_s(0)$ acts semisimply on ordinary $L_{\widehat{\mathfrak{g}}}(k,0)$-modules,
and then $\fh(0)$ acts semisimply on ordinary $L_{\widehat{\mathfrak{g}}}(k,0)$-modules.
Together with the classification of simple ordinary modules \cite{LW26} and the complete reducibility of weak $L_{\widehat{\mathfrak{g}}}(k,0)$-modules in the category $\mathcal{O}$ \cite{GK25},
we show that the category $\mathcal{O}_{k}^{ord}(\fg)$ is finite and semisimple.
From Corollary 3.13 in \cite{CY}, we know that
if the category of ordinary $L_{\fgt}(k,0)$-modules is semisimple,
then the category $\KL_k(\fg)$ of finite-length generalized modules has a natural structure of braided tensor category,
and this result naturally generalizes to affine vertex operator superalgebras \cite{CMY22}.
To prove $\mathcal{O}_{k}^{ord}(\fg)$ is a braided tensor supercategory,
we prove $\KL_k(\fg)=\mathcal{O}_{k}^{ord}(\fg)$,
the strategy is to prove $\Ext^1_{\KL_k(\fg)}(L,L)=0$ for any irreducible ordinary $L_{\widehat{\mathfrak{g}}}(k,0)$-module $L$.
Since $\mathcal{O}_{k}^{ord}(\fg)$ is a braided tensor supercategory
and all structure morphisms of $\mathcal{O}_{k}^{ord}(\fg)$ are even \cite{CKM},
the even part $\mathcal{O}_{k}^{ord}(\fg)^{\mathrm{ev}}$
is a braided tensor category, where $\mathcal{O}_{k}^{ord}(\fg)^{\mathrm{ev}}$ is the subcategory of $\mathcal{O}_{k}^{ord}(\fg)$
with the same objects and with parity-preserving homomorphisms as morphisms.
Furthermore, we show that $\mathcal{O}_{k}^{ord}(\fg)^{\mathrm{ev}}$ is an $r$-category in the sense of \cite{BD}.
From \cite{EP}, every semisimple braided $r$-category of moderate growth is rigid,
we obtain the rigidity of the category $\mathcal{O}_{k}^{ord}(\fg)^{\mathrm{ev}}$ and thus
$\mathcal{O}_{k}^{ord}(\fg)$ is a finite semisimple rigid braided tensor supercategory with the ribbon twist $\theta=e^{2\pi iL(0)}$, i.e., a ribbon fusion supercategory.

The paper is organized as follows.
In Section~2, we first review some results about affine Lie superalgebras from \cite{GK11,GK25} and prove a technical lemma.
Then we recall the classification result of irreducible ordinary modules
for simple affine vertex operator superalgebra $L_{\widehat{\mathfrak{g}}}(k,0)$ at boundary admissible level $k$,
by this classification result we prove that every ordinary module of $L_{\widehat{\mathfrak{g}}}(k,0)$ at boundary admissible level $k$ has finite length.
In Section~3, we prove the semisimplicity of $\fh(0)$ on ordinary $L_{\widehat{\mathfrak{g}}}(k,0)$-modules,
and obtain that $\mathcal{O}_{k}^{ord}(\fg)$ is finite and semisimple.
In Section~4, we prove that $\mathcal{O}_{k}^{ord}(\fg)$ is a rigid braided tensor supercategory with the ribbon twist $\theta=e^{2\pi iL(0)}$, and thus $\mathcal{O}_{k}^{ord}(\fg)$ is a ribbon fusion supercategory.

\section{Preliminaries}

In this section, we first review some results about affine Lie superalgebras from \cite{GK11,GK25} and prove the technical Lemma \ref{lem:tangent}.
Then we recall the classification result of irreducible ordinary modules
for simple affine vertex operator superalgebra $L_{\widehat{\mathfrak{g}}}(k,0)$ at boundary admissible level $k$,
and prove that every ordinary module of $L_{\widehat{\mathfrak{g}}}(k,0)$ at boundary admissible level $k$ has finite length.

\subsection{Affine Lie superalgebra}

Let $\fg$ be a basic classical Lie superalgebra and let
$\fgt=\fg\otimes\C[t,t^{-1}]\oplus\C K\oplus\C d$ be the corresponding non-twisted affine Lie superalgebra.
Let $\fh$ be a Cartan subalgebra of $\fg$ and let $(\cdot,\cdot)$ be
the nondegenerate even supersymmetric invariant bilinear form
which is normalized by $h^\vee\geq0$ and $(\alpha,\alpha)=2$ for a root of maximal square length,
where $h^{\vee}$ is the dual Coxeter number, i.e., half of the eigenvalue of the Casimir operator.
We fix simple roots of $\fg$ as the distinguished simple roots, and let $\Pi=\{\alpha_1,\cdots,\alpha_n\}$ be the set of simple roots, let $\theta$ be the highest root,
let $\rho$ be the Weyl vector of $\mathfrak{g}$ with respect to $\Pi$, i.e., $(\rho,\alpha_i)=\frac{1}{2}(\alpha_i,\alpha_i)$ for any $i=1,\cdots,n$.
Let $\delta$ be the linear function on $\widehat{\mathfrak{h}}$ defined by $\delta|_{\mathfrak{h}\oplus\mathbb{C}K}=0, \delta(d)=1$.
Then $\widehat{\Pi}=\{\alpha_0=\delta-\theta,\alpha_1,\cdots,\alpha_n\}$ is a set of simple roots of $\widehat{\mathfrak{g}}$.
Note that $\widehat{\rho}=h^{\vee}\Lambda_0+\rho$ is the Weyl vector of $\widehat{\mathfrak{g}}$ with respect to $\widehat{\Pi}$.
Define the fundamental weights $\Lambda_i\in \widehat{\mathfrak{h}}^* (i=0,1,\cdots,n)$ by $\Lambda_i(\alpha_j^{\vee})=\delta_{ij}, \Lambda_i(d)=0$ for any $i,j\in\{0,1,\cdots,n\}$.
For $\beta\in \mathfrak{h}^*$, define the translation $t_\beta\in \mbox{End}~\widehat{\mathfrak{h}}^*$ by
$t_\beta({\lambda})={\lambda}+{\lambda}(K)\beta-(({\lambda},\beta)+\frac{1}{2}{\lambda}(K)(\beta,\beta))\delta.$
For any ${\lambda}\in \widehat{\mathfrak{h}}^*$,
denote by $M({\lambda})$ (resp. $L({\lambda})$) the Verma (resp. irreducible highest weight) $\widehat{\mathfrak{g}}$-module.
Denote by $s_\alpha$ the reflection with respect to an even root $\alpha$.

Let $\Delta$ (resp. $\Delta^{\mathrm{re}}$) be the root system (resp. real root system) of $\fgt$ and
set $\bar{\Delta}^{\mathrm{re}}=\{\alpha\in\Delta^{\mathrm{re}}\mid \frac{1}{2}\alpha\notin\Delta^{\mathrm{re}}\}$.
For any $\lambda\in\fht^*$,
let $\bar{R}_{\lambda+\widehat{\rho}}$ be the set of $\alpha\in\bar{\Delta}^{\mathrm{re}}$ with
$2(\lambda+\widehat{\rho},\alpha)\in\Z(\alpha,\alpha)$ if $2\alpha\notin \Delta$ and
$2(\lambda+\widehat{\rho},\alpha)\in(2\Z+1)(\alpha,\alpha)$ if $2\alpha\in \Delta$.
Set $R_{\lambda+\widehat{\rho}}=\bar{R}_{\lambda+\widehat{\rho}}\cup\{2\alpha\mid\alpha\in\bar{R}_{\lambda+\widehat{\rho}},2\alpha\in\Delta\}$ and
let $\Delta_{\lambda+\widehat{\rho}}\subseteq\Delta^{\mathrm{re}}$ be the minimal subset satisfying the following properties:
(i) $\bar{R}_{\lambda+\widehat{\rho}}\subset\Delta_{\lambda+\widehat{\rho}}$;
(ii) $s_\alpha\beta\in\Delta_{\lambda+\widehat{\rho}}$ for any $\alpha,\beta\in \Delta_{\lambda+\widehat{\rho}}$ such that $(\alpha,\alpha)\ne0$;
(iii) $\Delta_{\lambda+\widehat{\rho}}\cap\{\alpha\pm \beta\}=\Delta^{\mathrm{re}}\cap\{\alpha\pm \beta\}$ for any $\alpha,\beta\in \Delta_{\lambda+\widehat{\rho}}$ such that
$(\alpha,\alpha)=0\ne (\beta,\alpha)$;
(iv) if $\alpha\in\Delta_{\lambda+\widehat{\rho}}$ and $2\alpha\in\Delta^{\mathrm{re}}$, then $2\alpha\in\Delta_{\lambda+\widehat{\rho}}$.
Let $\Delta_{\lambda+\widehat{\rho}}^{+}$ be the positive root set of $\Delta_{\lambda+\widehat{\rho}}$.
An element $\lambda\in\fht^*$ is called {\em non-critical} if $(\lambda+\widehat{\rho},\delta)\ne 0$.
For any non-critical $\lambda$, let $U(\lambda)\subseteq\fht^*$
be the set of weights $\nu$ for which there exists a chain
$\nu+\widehat{\rho}=\mu_r<\mu_{r-1}<\cdots<\mu_0=\lambda+\widehat{\rho}$
with $\mu_{i+1}=\mu_i-m_i\gamma_i$ for some $\gamma_i\in\Delta^{\mathrm{re}}\cap\Delta^+$
and $m_i\in\Z_{>0}$ satisfying $2(\mu_i,\gamma_i)=m_i(\gamma_i,\gamma_i)$,
where $m_i$ is odd if $\gamma_i$ is odd non-isotropic and $m_i=1$ if
$\gamma_i$ is isotropic.

\begin{lem}[\cite{GK25}]\label{lem:U} Let $\lambda\in\fht^*$ be non-critical.\\
{\rm (i)} If $\mu\in\fht^*$ such that $[M(\lambda):L(\mu)]\neq0$, then $\mu\in U(\lambda)$,
$\Delta_{\mu+\widehat{\rho}}=\Delta_{\lambda+\widehat{\rho}}$ and
$\lambda-\mu\in\Z_{\ge0}\Delta_{\lambda+\widehat{\rho}}$. \\
{\rm (ii)} If $\alpha$ is a real isotropic root such that $(\lambda+\widehat{\rho},\alpha)=0$, then $\Delta_{\lambda+\widehat{\rho}}=\Delta_{\lambda+\widehat{\rho}+\alpha}$.
\end{lem}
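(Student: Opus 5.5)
Part (i) is the analogue of the Kac–Kazhdan/BGG description of the subquotients of a Verma module, and I would get it from the Jantzen filtration together with the Shapovalov determinant. The plan is an induction on the height of $\lambda-\mu$.

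Suppose $[M(\lambda):L(\mu)]\neq0$ with $\mu\neq\lambda$. Then $L(\mu)$ is a subquotient of $M(\lambda)^1$, the first term of the Jantzen filtration. The Jantzen sum formula expresses $\sum_{i\ge1}\mathrm{ch}\,M(\lambda)^i$ as a combination of characters $\mathrm{ch}\,M(\lambda-m\gamma)$. These come from the factors of the Kac–Kazhdan (Gorelik–Kac) Shapovalov determinant for $\fgt$ at non-critical level, and the sum runs over the pairs $(\gamma,m)$ that appear in the definition of $U(\lambda)$:
\begin{itemize}
\item $\gamma\in\Delta^{\mathrm{re}}\cap\Delta^+$ with $2(\lambda+\widehat{\rho},\gamma)=m(\gamma,\gamma)$;
\item $m$ odd when $\gamma$ is odd non-isotropic;
\item $m=1$ and $(\lambda+\widehat\rho,\gamma)=0$ when $\gamma$ is isotropic.
\end{itemize}
Non-criticality is exactly what removes the imaginary-root factors. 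For isotropic roots the formula involves generalized Verma-type terms rather than genuine Vermas, but their characters still lie in the span of $\mathrm{ch}\,M(\nu)$ with $\nu\le\lambda-\gamma$.

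It follows that $L(\mu)$ occurs in some $M(\lambda-m\gamma)$, and by induction $\mu\in U(\lambda-m\gamma)\subseteq U(\lambda)$. This gives the first claim.

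For the remaining claims of (i) it suffices to treat one chain step $\mu'+\widehat\rho=\lambda+\widehat\rho-m\gamma$ and show two things:
\begin{itemize}
\item $\gamma\in\Delta_{\lambda+\widehat\rho}$;
\item $\Delta_{\mu'+\widehat\rho}=\Delta_{\lambda+\widehat\rho}$.
\end{itemize}
Then $\lambda-\mu\in\Z_{\ge0}\Delta_{\lambda+\widehat\rho}$ follows by summing over the chain, since each $\gamma_i$ is a positive root.

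The first point is immediate: the condition $2(\mu_i,\gamma)=m(\gamma,\gamma)$, with the stated parity, says exactly that $\gamma\in R_{\lambda+\widehat\rho}$. For a non-isotropic $\gamma$ we have $\lambda+\widehat\rho-m\gamma=s_\gamma(\lambda+\widehat\rho)$. The integrality conditions defining $\bar R$ are $W$-equivariant, so $\bar R_{s_\gamma(\lambda+\widehat\rho)}=s_\gamma\bar R_{\lambda+\widehat\rho}$. The closure defining $\Delta_{\cdot}$ commutes with $s_\gamma$, and $\Delta_{\lambda+\widehat\rho}$ is $s_\gamma$-stable by property (ii). Hence the two root sets coincide.

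The isotropic step is precisely part (ii), which I would prove directly. Take $\alpha$ isotropic with $(\lambda+\widehat\rho,\alpha)=0$, and $\beta\in\bar\Delta^{\mathrm{re}}$. The defining conditions for $\bar R$ involve $(\lambda+\widehat\rho+\alpha,\beta)=(\lambda+\widehat\rho,\beta)+(\alpha,\beta)$.

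\emph{Case $(\alpha,\beta)=0$.} Membership of $\beta$ in $\bar R$ is unchanged.

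\emph{Case $(\alpha,\beta)\neq0$.} Here I would show that $\beta$ belongs to the minimal closed set generated from either side. The tool is property (iii): $\alpha$ lies in both root sets, since $(\cdot,\alpha)=0\in\Z(\alpha,\alpha)$. Combined with (iii), this forces $\beta\pm\alpha$ into the set whenever $\beta$ is in it. The integrality defect introduced by the shift by $\alpha$ is then compensated by passing between $\beta$ and $s_{\beta}$- or $(\beta\pm\alpha)$-images.

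\textbf{Main obstacle.} I expect this last compensation argument to be the hard step, because $(\alpha,\beta)$ need not be an integer multiple of $(\beta,\beta)/2$ and the argument depends on the type of $\fg$. I would first try to reduce it, via the affine Weyl group, to a rank-two computation inside the subsystem spanned by $\alpha$ and $\beta$, and settle that case by inspecting the finitely many rank-two real root configurations. The Jantzen/Shapovalov input for the first claim I would take from Gorelik–Kac's determinant formula for non-critical levels.
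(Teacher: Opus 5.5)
The paper does not prove this lemma; it is quoted from \cite{GK25}, so your proposal has to stand on its own. Your route to the first claim of (i) is the standard one: the Jantzen filtration plus the Gorelik--Kac determinant, with non-criticality removing the imaginary factors. Reducing the other two claims of (i) to single chain steps is also right: non-isotropic steps by $s_\gamma$-stability of $\Delta_{\lambda+\widehat\rho}$, isotropic steps by (ii).

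\textbf{The gap: (ii) is not proved, and your sketch points in the wrong direction.} Write $\Lambda=\lambda+\widehat\rho$.

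For non-isotropic $\beta\in\bar\Delta^{\mathrm{re}}$ there is no integrality defect to compensate. We have $2(\alpha,\beta)/(\beta,\beta)=\langle\alpha,\beta^\vee\rangle\in\Z$, since $\beta$ (or $2\beta$ if $\beta$ is odd) is an even real root, and this integer is even when $\beta$ is odd. Hence $\beta\in\bar R_\Lambda\iff\beta\in\bar R_{\Lambda+\alpha}$.

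The only roots that change status are isotropic $\beta$ with $(\alpha,\beta)\neq0$. For these, $\beta\in\bar R_{\Lambda+\alpha}$ forces $(\Lambda,\beta)=-(\alpha,\beta)\neq0$, so $\beta\notin\bar R_\Lambda$, and $\beta$ must be produced inside $\Delta_\Lambda$ by the closure rules. Your appeal to (iii) is circular here: it yields $\beta\pm\alpha$ only once $\beta$ is already in the set. There is also no $s_\beta$ for isotropic $\beta$.

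The missing idea is an auxiliary non-isotropic root of $\bar R_\Lambda$:
\begin{itemize}
\item Applying the super Jacobi identity to $[e_\alpha,e_{-\alpha}]$ and $e_\beta$ shows that $\eta=\beta+\epsilon\alpha$ is a root for some $\epsilon=\pm1$.
\item It satisfies $(\eta,\eta)=2\epsilon(\alpha,\beta)\neq0$ and $2(\Lambda,\eta)/(\eta,\eta)=-\epsilon$.
\item So $\eta\in\bar R_\Lambda$ unless $\frac12\eta$ is an odd root, and then $s_\eta\alpha=-\epsilon\beta$ puts $\beta$ in $\Delta_\Lambda$.
\end{itemize}
The exceptional case $\eta=2\gamma'$, with $\gamma'$ odd non-isotropic, genuinely occurs in $B(m|n)^{(1)}$ and $G(3)^{(1)}$, e.g.\ $\alpha=-\epsilon_1+\delta_1$, $\beta=\epsilon_1+\delta_1$. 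There $\gamma'\notin\bar R_\Lambda$, because $2(\Lambda,\gamma')/(\gamma',\gamma')=-2\epsilon$ is even. One must instead use the even real root $\frac12(\beta-\alpha)$: its ratio $2(\Lambda,\cdot)/(\cdot,\cdot)$ equals $2$, and its reflection sends $\alpha$ to $\beta$.

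Once $\bar R_{\Lambda+\alpha}\subseteq\Delta_\Lambda$ is shown, minimality and the symmetry $(\Lambda,\alpha)\mapsto(\Lambda+\alpha,-\alpha)$ give (ii). Without this analysis neither (ii) nor the isotropic steps of (i) are established.

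\textbf{A smaller error in (i).} The determinant, and hence the Jantzen sum formula, has no factors for even roots $\gamma$ with $\frac12\gamma$ an odd root; they cancel against even multiples of $\frac12\gamma$. Your list of pairs, copied from the definition of $U(\lambda)$, does not exclude them. For such $\gamma$ your ``immediate'' claim $\gamma\in R_{\lambda+\widehat\rho}$ is false, since $2(\mu_i,\frac12\gamma)$ is then an even multiple of $(\frac12\gamma,\frac12\gamma)$.

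Along such a step the conclusion $\lambda-\mu\in\Z_{\ge0}\Delta_{\lambda+\widehat\rho}$ can actually fail. For $\widehat{\mathfrak{osp}(1|2)}$ at generic level with $2(\Lambda,\beta)=2(\beta,\beta)$ one gets $\Delta_\Lambda=\emptyset$, yet $(2\beta,1)$ is an allowed $U$-step. So the last two claims of (i) must be propagated along the chain coming from the Jantzen formula, not along an arbitrary $U(\lambda)$-chain.

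Finally, for isotropic $\gamma$ the Jantzen terms are $\mathrm{ch}\,M(\lambda-j\gamma)$, $j\ge1$, with alternating signs. You should note that $\lambda-j\gamma$ is reached by $j$ successive isotropic steps.
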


\begin{defi}\label{defi:lmin}
{\rm Let $\lambda\in\fht^*$ be non-critical. Define
\[
C'(\lambda)\subset\Big\{(\alpha,m)\big|\ \alpha\in\Delta^{\mathrm{re}}_+,\ m\in\Z_{>0},\
2(\lambda+\widehat{\rho},\alpha)=m(\alpha,\alpha)\Big\}
\]
such that $m$ is odd if $\alpha$ is odd non-isotropic and $m=1$ if
$\alpha$ is isotropic, and $\frac{1}{2}\alpha$ is not an odd root if $\alpha$ is even. A real
root $\alpha$ with $(\alpha,m)\in C'(\lambda)$ is called \emph{$\lambda$-minimal} if
$\big[M(\lambda-n\beta):L(\lambda-m\alpha)\big]=0$
for all $(\beta,n)\in C'(\lambda),\ (\beta,n)\neq(\alpha,m)$.}
\end{defi}

Let $\lambda\in\fht^*$ be non-critical and let $L(\lambda)$ be the irreducible highest
weight $\fgt$-module with highest weight $\lambda$.
For exact sequence $0\longrightarrow L(\lambda)\stackrel{\phi_1}{\longrightarrow} E\stackrel{\phi_2}{\longrightarrow} L(\lambda)\longrightarrow0$,
let $v:=\phi_1(v_\lambda)$ and fix $v^\prime\in E_\lambda$ such that $\phi_2(v^\prime)=v_\lambda$,
where $v_\lambda$ is the highest weight vector of $L(\lambda)$ and $E_\lambda$ is the generalized weight space of $E$ with weight $\lambda$.
It is clear that $v,v^\prime$ is a basis of $E_\lambda$.
From Subsection 1.6 of \cite{GK11},
there exists a unique $\nu\in\fht^*$ such that
$hv^\prime=\lambda(h)v^\prime+\nu(h)\,v$ for any $h\in\fht$, we then define
\[
\Upsilon_\lambda:\Ext^1_{\fgt}\big(L(\lambda),L(\lambda)\big)\longrightarrow\fht^*,
\qquad \Upsilon_\lambda(E)=\nu.
\]

\begin{thm}[\cite{GK11}]\label{thm:Upsilon}
Let $\lambda\in\fht^*$ be non-critical.
Then the map $\Upsilon_\lambda$ is injective and
\[
\im\Upsilon_\lambda\subseteq\Big\{\mu\in\fht^*\big|\ (\mu,\alpha)=0
\ \text{for every } \lambda\text{-minimal root } \alpha\Big\}.
\]
\end{thm}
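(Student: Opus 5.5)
The plan is to reconstruct the argument of \cite{GK11} directly. I would first show that $\Upsilon_\lambda$ is well defined and linear, then prove injectivity, and finally prove orthogonality to each $\lambda$-minimal root.

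\emph{Well-definedness.} Let $E$ be an extension as above. Since $E_\lambda$ is two-dimensional and $\fht$ is abelian, every $h\in\fht$ acts on $E_\lambda$ by an upper triangular matrix in the basis $v,v'$. Hence $hv'=\lambda(h)v'+\nu(h)v$ for some linear functional $\nu$. Changing the lift $v'$ to $v'+cv$ does not change $\nu$, because $v$ is an eigenvector. An isomorphism of extensions carries lifts to lifts. The Baer sum adds the off-diagonal parts, so $\Upsilon_\lambda$ is a linear map.

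\emph{Injectivity.} Suppose $\nu=0$. Then $v'$ is an $\fht$-weight vector of weight $\lambda$. Because $\lambda$ is the highest weight of $E$, the vector $v'$ is a highest weight vector: $\fgt_+v'$ has weights strictly above $\lambda$, so $\fgt_+v'=0$. This gives a map $M(\lambda)\to E$ with $v_\lambda\mapsto v'$. Let $N$ be its image, which is a highest weight module. I need $N\cong L(\lambda)$, since then $N$ splits the sequence. Any proper submodule of $N$ has weights below $\lambda$, so its image in the quotient $E/\phi_1(L(\lambda))\cong L(\lambda)$ is a proper submodule and therefore zero. Hence it lies in $\phi_1(L(\lambda))$. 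But $\phi_1(L(\lambda))$ has a one-dimensional $\lambda$-weight space spanned by $v$, and $v\notin N$ because $N_\lambda=\C v'$. So any proper submodule of $N$ is a proper submodule of $\phi_1(L(\lambda))$, which is simple. Therefore it is zero and $N\cong L(\lambda)$. This shows $E$ splits, so $\Upsilon_\lambda$ is injective.

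\emph{Orthogonality; the main step.} Let $\alpha$ be $\lambda$-minimal with $(\alpha,m)\in C'(\lambda)$. I would use the singular vector $S_{\alpha,m}v_\lambda$ of weight $\lambda-m\alpha$ in $M(\lambda)$. This is the Malikov--Feigin--Fuchs type vector, or $e_{-\alpha}v_\lambda$ up to lower terms when $\alpha$ is isotropic, or the odd-reflection version. It can be chosen to depend polynomially on the highest weight along the hyperplane $\{\mu:2(\mu+\rh,\alpha)=m(\alpha,\alpha)\}$.

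In $E$, consider $w=S_{\alpha,m}v'$. It is killed by raising operators modulo $\C$-multiples of terms coming from $\nu$. More precisely, for simple root vectors $e_i$, one has $e_iw=c_i(\nu)\,u_i$, where $u_i$ lies in $\phi_1(L(\lambda))$. The coefficient $c_i(\nu)$ is the derivative of the coefficient of the singular-vector equation in the direction $\nu$. This is the standard deformation computation: $\fht$ acts on $v'$ as $\lambda+\epsilon\nu$ with $\epsilon^2=0$. The whole derivative vanishes exactly when $\nu$ is tangent to the hyperplane, that is, when $(\nu,\alpha)=0$.

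Now suppose $(\nu,\alpha)\neq0$. Then the image of $w$ in the quotient generates a singular vector, and the image of $S_{\alpha,m}v_\lambda$ in $L(\lambda)$ is zero. I then analyse the submodule generated by $w$. The minimality assumption gives $[M(\lambda-n\beta):L(\lambda-m\alpha)]=0$ for the other pairs $(\beta,n)$. This should force $L(\lambda-m\alpha)$ to appear as a subquotient of $E$ that is not accounted for in $\phi_1(L(\lambda))$ or in the quotient $L(\lambda)$. That contradicts the composition factors of $E$.

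The hardest part is making this contradiction precise. The minimality condition ensures that $S_{\alpha,m}v_\lambda$ cannot be expressed inside $M(\lambda)$ through other singular vectors. Consequently its deformation $(\nu,\alpha)\,\partial S$ produces a nonzero vector of weight $\lambda-m\alpha$ in $\phi_1(L(\lambda))$ that would have to be singular there, which is impossible in a simple module. Hence $(\nu,\alpha)=0$, giving the claimed inclusion of $\im\Upsilon_\lambda$.
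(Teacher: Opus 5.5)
Your well-definedness, linearity and injectivity steps are correct. The paper gives no proof of this statement; it is quoted from \cite{GK11}. The gap is in the orthogonality step, which is the actual content of the theorem: your argument never reaches a contradiction, and it never really uses $\lambda$-minimality.

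Write $\phi(\mu)=2(\mu+\rh,\alpha)-m(\alpha,\alpha)$. Let $X(\mu)$ be your family (your $S_{\alpha,m}$), with $X(\mu)v_\mu$ singular whenever $\phi(\mu)=0$. Then $e_iX(\mu)v_\mu=\phi(\mu)R_i(\mu)v_\mu$ with $R_i(\mu)$ polynomial in $\mu$. So for $w=X(\lambda+\epsilon\nu)v'$ one gets $e_iw=2(\nu,\alpha)R_i(\lambda)v$. Thus $w$ is singular when $(\nu,\alpha)=0$, not when $(\nu,\alpha)\neq0$. In the case you want to rule out, $w$ is merely some vector of weight $\lambda-m\alpha$ in $\phi_1(L(\lambda))$, which contradicts nothing, so the mechanism in your last paragraph is not valid. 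You also assert without proof that such a family exists, singular on the whole hyperplane near $\lambda$ and nonzero at $\lambda$. At a non-generic $\lambda$, e.g.\ for a non-simple isotropic $\alpha$, this is not automatic.

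What is missing is the following.
\begin{itemize}
\item Since $\nu\neq0$, $E$ is generated by $v'$. So $E$ is the quotient of $M_{\C[\epsilon]/(\epsilon^2)}(\lambda+\epsilon\nu)$ by the radical of its Shapovalov form $\langle\cdot,\cdot\rangle_0+\epsilon\langle\cdot,\cdot\rangle_1$.
\item A rank count then shows that $E$ has only the two composition factors $L(\lambda)$ if and only if $\langle\cdot,\cdot\rangle_1$ vanishes on $N(\lambda)\times N(\lambda)$, where $N(\lambda)$ is the maximal submodule of $M(\lambda)$.
\item Take $Y\in U(\mathfrak{n}_-)_{-m\alpha}$ with $Yv_\lambda\in N(\lambda)$, and differentiate $\langle X(\mu)v_\mu,Yv_\mu\rangle_\mu=\phi(\mu)\Psi_\mu(Y)$ in the direction $\nu$. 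The term involving the derivative of $X$ drops out because $Yv_\lambda$ lies in the radical. This gives $\langle X(\lambda)v_\lambda,Yv_\lambda\rangle_1=2(\nu,\alpha)\Psi_\lambda(Y)$, with $\Psi_\lambda$ independent of $\nu$.
\item Hence you must show $\Psi_\lambda\neq0$ on $N(\lambda)_{\lambda-m\alpha}$. Equivalently, the singular vector $X(\lambda)v_\lambda$ must not lie in the second layer $M^2$ of the Jantzen filtration of $M(\lambda)$ along a generic direction.
\end{itemize}

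This last point is precisely where minimality is needed. A Jantzen-type sum formula along a generic direction, combined with $[M(\lambda-n\beta):L(\lambda-m\alpha)]=0$ for all $(\beta,n)\neq(\alpha,m)$, shows that $L(\lambda-m\alpha)$ occurs in $M^1$ but not in $M^2$. A nonzero singular vector of weight $\lambda-m\alpha$ inside $M^2$ would force $[M^2:L(\lambda-m\alpha)]>0$. None of these steps appears in your proposal.
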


We call $\gamma\in\Delta_{\lambda+\widehat{\rho}}^{+}$ indecomposable in $\Delta_{\lambda+\widehat{\rho}}^{+}$ if
$\gamma$ is not a non-trivial positive integral combination
of positive roots of $\Delta_{\lambda+\widehat{\rho}}^{+}$.
Then we have the following lemma.

\begin{lem}\label{lem:tangent}
Let $\lambda\in\fht^*$ be
non-critical, and let $\gamma\in\Delta_{\lambda+\widehat{\rho}}^{+}$ be
isotropic and indecomposable in $\Delta_{\lambda+\widehat{\rho}}^{+}$ such that $(\lambda+\widehat{\rho},\gamma)=0$. If
$0\longrightarrow L(\lambda)\longrightarrow E\longrightarrow L(\lambda)\longrightarrow0$
is an exact sequence with $\Upsilon_\lambda(E)=\nu\in\fht^*$, then
$(\nu,\gamma)=0$.
\end{lem}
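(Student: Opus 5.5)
The plan is to show that $\gamma$ is $\lambda$-minimal, with multiplicity $m=1$, and then invoke Theorem~\ref{thm:Upsilon}.

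\textbf{Step 1: $(\gamma,1)\in C'(\lambda)$.} Since $\gamma$ is isotropic, $(\gamma,\gamma)=0$. Hence the hypothesis $(\lambda+\widehat\rho,\gamma)=0$ gives $2(\lambda+\widehat\rho,\gamma)=0=1\cdot(\gamma,\gamma)$. Moreover $\gamma\in\Delta^{\mathrm{re}}_+$, and the condition on $\frac12\alpha$ in Definition~\ref{defi:lmin} only concerns even roots. So $(\gamma,1)\in C'(\lambda)$, and it remains to verify the minimality condition
\[
\big[M(\lambda-n\beta):L(\lambda-\gamma)\big]=0\quad\text{for all }(\beta,n)\in C'(\lambda),\ (\beta,n)\neq(\gamma,1).
\]

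\textbf{Step 2: argue by contradiction.} Suppose $[M(\lambda-n\beta):L(\lambda-\gamma)]\ne0$ for some $(\beta,n)\in C'(\lambda)$ with $(\beta,n)\neq(\gamma,1)$.

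By the Kac--Kazhdan type determinant formula for $\fgt$ (as used in \cite{GK11}), the condition $2(\lambda+\widehat\rho,\beta)=n(\beta,\beta)$, with the parity restrictions built into $C'(\lambda)$, implies $[M(\lambda):L(\lambda-n\beta)]\neq0$. Applying Lemma~\ref{lem:U}(i) to $M(\lambda)$ then gives
\[
\Delta_{\lambda-n\beta+\widehat\rho}=\Delta_{\lambda+\widehat\rho},\qquad n\beta\in\Z_{\ge0}\Delta_{\lambda+\widehat\rho}.
\]
Applying Lemma~\ref{lem:U}(i) to $M(\lambda-n\beta)$ and the assumed subquotient $L(\lambda-\gamma)$ gives
\[
\gamma-n\beta\in\Z_{\ge0}\Delta_{\lambda-n\beta+\widehat\rho}=\Z_{\ge0}\Delta_{\lambda+\widehat\rho}.
\]
We also need $\beta\in\Delta^+_{\lambda+\widehat\rho}$. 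This follows directly from the definition: the condition $2(\lambda+\widehat\rho,\beta)=n(\beta,\beta)$ places $\beta$ (or $\frac12\beta$, in the case $2\cdot\frac12\beta\in\Delta$) in $\bar R_{\lambda+\widehat\rho}$. Then property (iv) of $\Delta_{\lambda+\widehat\rho}$ gives $\beta\in\Delta_{\lambda+\widehat\rho}$, and $\beta$ is positive.

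\textbf{Step 3: use indecomposability.} From Step 2,
\[
\gamma=n\beta+(\gamma-n\beta),
\]
where $n\beta$ is a positive integral multiple of a root in $\Delta^+_{\lambda+\widehat\rho}$ and $\gamma-n\beta$ is a nonnegative integral combination of positive roots of $\Delta_{\lambda+\widehat\rho}$. There are two cases.
\begin{itemize}
\item If $\gamma-n\beta\neq0$, or if $n\ge2$, this writes $\gamma$ as a nontrivial positive integral combination of roots of $\Delta^+_{\lambda+\widehat\rho}$. That contradicts the indecomposability of $\gamma$.
\item Otherwise $\gamma=\beta$ and $n=1$, which contradicts $(\beta,n)\neq(\gamma,1)$.
\end{itemize}
Hence $\gamma$ is $\lambda$-minimal, and Theorem~\ref{thm:Upsilon} yields $(\nu,\gamma)=0$.

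\textbf{Main obstacle.} The delicate point is Step 2. One must justify that $L(\lambda-n\beta)$ really occurs in $M(\lambda)$ for every $(\beta,n)\in C'(\lambda)$, so that Lemma~\ref{lem:U}(i) transfers the equality $\Delta_{\lambda-n\beta+\widehat\rho}=\Delta_{\lambda+\widehat\rho}$. One must also handle the parity and $2\alpha$ conventions so that $\beta$ lies in $\Delta^+_{\lambda+\widehat\rho}$.

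If the Kac--Kazhdan step is awkward in the super setting, a fallback is available. For non-isotropic $\beta$ one has $\lambda-n\beta+\widehat\rho=s_\beta(\lambda+\widehat\rho)$, and $\Delta_{\lambda+\widehat\rho}$ is stable under $s_\beta$ by property (ii). For isotropic $\beta$ one uses Lemma~\ref{lem:U}(ii). Either way this gives $\Delta_{\lambda-n\beta+\widehat\rho}=\Delta_{\lambda+\widehat\rho}$ directly.
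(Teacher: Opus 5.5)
Your proposal is correct and follows the paper's proof: you show that $\gamma$ is $\lambda$-minimal by contradiction, establish $\Delta_{\lambda-n\beta+\widehat\rho}=\Delta_{\lambda+\widehat\rho}$, apply Lemma~\ref{lem:U}(i) to $M(\lambda-n\beta)$, and contradict indecomposability. The paper uses exactly your ``fallback'' (the reflection $s_\beta$ together with $\Delta_{s_\beta\mu}=s_\beta\Delta_\mu$ for non-isotropic $\beta$, and Lemma~\ref{lem:U}(ii) applied to $-\beta$ for isotropic $\beta$) rather than a super Kac--Kazhdan statement, which is the safer route since it needs only the quoted lemmas; note also that $\beta\in\bar R_{\lambda+\widehat\rho}$ in every case allowed by $C'(\lambda)$, so it is property (i), not (iv), that gives $\beta\in\Delta_{\lambda+\widehat\rho}$.
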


\begin{proof}
We want to prove that $\gamma$ is
$\lambda$-minimal, then by Theorem~\ref{thm:Upsilon}, we have $(\nu,\gamma)=(\Upsilon_\lambda(E),\gamma)=0$.
Suppose $\gamma$ is not $\lambda$-minimal, then there exists $(\eta,r)\in C'(\lambda)$
with $(\eta,r)\neq(\gamma,1)$ such that
$\big[M(\lambda-r\eta):L(\lambda-\gamma)\big]\neq0$.
Since $(\eta,r)\in C'(\lambda)$, $\eta\in R_{\lambda+\widehat{\rho}}\subseteq
\Delta_{\lambda+\widehat{\rho}}$, we have $\eta\in\Delta_{\lambda+\widehat{\rho}}^{+}$.
If $\eta$ is non-isotropic, then
$2(\lambda+\widehat{\rho},\eta)=r(\eta,\eta)$ and $\lambda-r\eta+\widehat{\rho}=s_{\eta}(\lambda+\widehat{\rho})$,
hence $\eta\in\Delta_{\lambda+\widehat{\rho}}$ implies that $\Delta_{\lambda+\widehat{\rho}}$ is invariant under $s_{\eta}$, i.e.,
$\Delta_{\lambda-r\eta+\widehat{\rho}}=s_{\eta}\Delta_{\lambda+\widehat{\rho}}=\Delta_{\lambda+\widehat{\rho}}$. If $\eta$ is isotropic, then
$(\lambda+\widehat{\rho},\eta)=0$, by Lemma~\ref{lem:U} (ii), we have
$\Delta_{\lambda+\widehat{\rho}}=\Delta_{\lambda-r\eta+\widehat{\rho}}$.
By Lemma~\ref{lem:U} (i), $\big[M(\lambda-r\eta):L(\lambda-\gamma)\big]\neq0$ implies $\lambda-\gamma\in U(\lambda-r\eta)$, i.e.\
there exist $\eta_1,\dots,\eta_s\in\Delta_{\lambda+\widehat{\rho}}^{+}$ and $m_i\in\Z_{>0}$ such that
$\lambda-\gamma=\lambda-r\eta-\sum_{i=1}^{s}m_i\eta_i$,
where $\eta_i\in\Delta_{\lambda+\widehat{\rho}}^{+}$. Hence
$\gamma=r\eta+\sum_{i=1}^{s}m_i\eta_i$
is a positive integral combination of roots of $\Delta_{\lambda+\widehat{\rho}}^{+}$ and the sum of the
coefficient is $r+\sum_{i=1}^{s}m_i\ge 2$ because $(\eta,r)\neq(\gamma,1)$.
This contradicts with the assumption that $\gamma$ is indecomposable in $\Delta_{\lambda+\widehat{\rho}}^{+}$. Thus $\gamma$ is
$\lambda$-minimal.
\end{proof}

\begin{lem}[\cite{GK11}]\label{lem:split}
Let $0\to L(\lambda)\to E\to L(\lambda)\to0$ be an exact sequence
with $\Upsilon_\lambda(E)=\nu\in\fht^*$. Then $0\to L(\lambda)\to E\to L(\lambda)\to0$ splits over the derived algebra $[\fgt,\fgt]$ if and only if
$\nu\in\Delta^{\perp}=\{\mu\in\fht^*~|~(\mu,\alpha)=0~\mbox{for all}~\alpha\in\Delta\}$.
\end{lem}

\subsection{Ordinary modules for simple affine vertex operator superalgebra $L_{\fgt}(k,0)$}

\begin{defi}
{\em  Let $V$ be a $\mathbb{Z}$-graded vertex operator superalgebra.
A {\em weak $V$-module} is a module for $V$ as a vertex superalgebra.
A {\em generalized $V$-module} is a weak $V$-module $M$ with a $\mathbb{C}$-grading $M=\coprod_{\lambda\in\mathbb{C}}M_{\lambda}$
such that $M_{\lambda}$ for $\lambda\in\mathbb{C}$ are generalized eigenspaces for the operator $L(0)$ with eigenvalues $\lambda$.
A {\em grading-restricted generalized $V$-module} is a generalized $V$-module such that for any $\lambda\in\mathbb{C}$,
$M_{\lambda+m}=0$ for $m\in\mathbb{Z}$ sufficiently negative and $\mbox{dim} M_{\lambda}<\infty$.
An {\em ordinary $V$-module} is a grading-restricted generalized $V$-module such that $M_{\lambda}$ for $\lambda\in\mathbb{C}$
are eigenspaces for the operator $L(0)$. A generalized $V$-module $M$ is of length $l$ if there exist generalized $V$-submodules
$0=N_0\subset N_1\subset\cdots\subset N_l=M$ such that $N_{i+1}/N_i$ for $i=0,\cdots,l-1$ are irreducible ordinary $V$-modules.}
\end{defi}

\begin{rem}
{\rm As a module for a vertex operator superalgebra, every weak $V$-module $M$ carries a parity decomposition $M=M^{\bar{0}}\oplus M^{\bar{1}}$ compatible with the vertex operator structure. Accordingly, for any $V$-modules $M,N$, the space $\Hom(M,N)$ of $V$-module homomorphisms is a superspace, i.e., the direct sum of the spaces $\Hom^{0}(M,N)$ and $\Hom^{1}(M,N)$ of parity-preserving and parity-reversing homomorphisms.}
\end{rem}

All boundary admissible levels of indecomposable non-twisted affine Kac-Moody superalgebras $\widehat{\mathfrak{g}}$ except $D(2|1;a)^{(1)}$ are classified in \cite{GK25}.
The boundary principal admissible level of $\widehat{\mathfrak{g}}$ has the form
${k}=\frac{h^{\vee}}{u}-h^{\vee}$, where $u$ is a positive integer,
$r^\vee$ is lacity of $\widehat{\mathfrak{g}}^\sharp$ (the ratio of square lengths of a long root and of a short root of $\Delta^\sharp=\{\alpha\in\Delta^0~|~(\alpha,\alpha)>0\}$) such that $\mbox{gcd}(u,r^\vee)=\mbox{gcd}(u,h^\vee)=1$ (if $h^\vee$ is not integral, then $(u,h^\vee)=1$ stands for $(2u,2h^\vee)=1$).
The boundary subprincipal admissible level happens only for $\mathfrak{g}=osp(1|2n)$.
Let $L_{\widehat{\mathfrak{g}}}(k,0)$ be the simple affine vertex operator superalgebra associated to $\fg$ at level $k$.
We recall the classification result of irreducible ordinary $L_{\widehat{\mathfrak{g}}}(k,0)$-module at boundary principal admissible level $k$.

\begin{thm}[\cite{LW26}]\label{thm:LW}
Let $k=\tfrac{h^{\vee}}{u}-h^{\vee}$ be a boundary principal admissible level.\\
{\rm (i)} If $\fg$ is a basic classical Lie superalgebra of type I, then
$L_{\widehat{\mathfrak{g}}}(k,0)$ has exactly $u$ pairwise inequivalent
irreducible ordinary modules
$L(\widehat\lambda_p), p=0,1,\dots,u-1$,
where for $\fg=\mathfrak{sl}(n|m)$,
$\widehat\lambda_p=\Big(\frac{p+1}{u}-1\Big)h^{\vee}\Lambda_0-\frac{p\,h^{\vee}}{u}\Lambda_n$,
and for $\fg=\mathfrak{osp}(2|2n)$ one replaces $\Lambda_n$ by $\Lambda_1$.\\
{\rm (ii)} If $\fg$ is a finite-dimensional simple Lie algebra or a basic classical Lie
superalgebra of type II except $D(2|1;a)$, then $L(\widehat\lambda_0)$ is the only
irreducible ordinary $L_{\widehat{\mathfrak{g}}}(k,0)$-module, where $\widehat\lambda_0=k\Lambda_0$.
\end{thm}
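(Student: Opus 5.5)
Since this is the classification result quoted from \cite{LW26}, I will sketch how I would re-derive it rather than take it as given. The plan has three steps: reduce ordinary modules to highest weight modules, use the Gorelik--Kac description of the simple objects in category $\mathcal{O}$, and intersect that description with the condition that the top level be finite dimensional.

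\emph{Step 1: reduction to highest weight modules.} Let $M$ be an irreducible ordinary $L_{\widehat{\mathfrak{g}}}(k,0)$-module. Its lowest conformal weight space $M_{\mathrm{top}}$ is finite dimensional and is an irreducible $\fg$-module. Hence $M\cong L(\widehat\lambda)$ with $\widehat\lambda=k\Lambda_0+\lambda$, where $L_{\fg}(\lambda)$ is finite dimensional. So $\lambda$ is dominant integral for $\fg_{\bar 0}$ and satisfies Kac's finite-dimensionality condition at the odd simple root. In particular $M$ lies in category $\mathcal{O}$.

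\emph{Step 2: the simple objects in $\mathcal{O}$.} By the result of \cite{GK25} (rationality in $\mathcal{O}$ at boundary admissible level), the irreducible $L_{\widehat{\mathfrak{g}}}(k,0)$-modules in $\mathcal{O}$ are exactly the admissible modules of level $k$. At a boundary level these have an explicit parametrisation: the corresponding integral root system $\Delta_{\widehat\lambda+\widehat\rho}$ is isomorphic to that of $\widehat{\fg}$ rescaled by $u$. The simple highest weights are therefore
\[
\widehat\lambda+\widehat\rho=y\,t_{\beta}\Big(\frac{\widehat\rho}{u}\Big),
\]
with $y$ in the (finite) Weyl group and $\beta$ in a suitable lattice. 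These are taken modulo the stabiliser, subject to the requirement that $(t_\beta y)(u\widehat\Pi)$, suitably twisted, lies in the positive roots. I would recall this list explicitly from \cite{GK25}.

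\emph{Step 3: the intersection (main computation).} Impose $(\widehat\lambda+\widehat\rho,\alpha_i)=(\rho,\alpha_i)$, i.e.\ $\widehat\lambda(\alpha_i^\vee)\in\Z_{\ge0}$, for the even simple roots $\alpha_i$ of $\fg_{\bar 0}$. Also impose the level condition. Because $\widehat\rho/u$ has $\fg$-part $\rho/u$, integrality with respect to $\fg^{\mathrm{ss}}_{\bar 0}$ forces $y$ to be trivial on the semisimple directions. It also forces $\beta$ to be orthogonal to all even roots of $\fg^{\mathrm{ss}}_{\bar 0}$, since otherwise some $\fg_{\bar 0}$-simple coroot pairs to a non-integer or a negative number.
\begin{itemize}
\item[(ii)] If $\fg_{\bar 0}$ is semisimple (simple Lie algebras and type II), this forces $\beta=0$ and $y=1$, so $\widehat\lambda=k\Lambda_0$.
\item[(i)] In type I, $\beta$ may lie along the one-dimensional centre direction, i.e.\ $\beta\in\Z\,\omega$, where $\omega$ is proportional to the fundamental weight attached to the odd simple root. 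The admissibility inequalities (positivity of $t_\beta y(\alpha_0)$ and of the odd simple root image) bound the multiple to $0,\dots,u-1$.
\end{itemize}
Writing the result in fundamental weights gives $\widehat\lambda_p=\big(\frac{p+1}{u}-1\big)h^\vee\Lambda_0-\frac{ph^\vee}{u}\Lambda_n$, with $\Lambda_1$ in place of $\Lambda_n$ for $\mathfrak{osp}(2|2n)$. For this weight $\lambda_p$ is a multiple of the fundamental weight attached to the odd (isotropic) simple root, so it is dominant integral for $\fg_{\bar0}$. Moreover $L_{\fg}(\lambda_p)$ is one-dimensional, so the top level is finite dimensional.

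\emph{Step 4: distinctness and expected obstacle.} The $u$ weights have distinct $\Lambda_0$-coefficients, so the modules are pairwise inequivalent. The main obstacle is Step 3 in type I. There I must verify that the boundary admissible inequalities carve out exactly $p\in\{0,\dots,u-1\}$, and that no nontrivial Weyl twist $y$ with odd reflections slips through. I would handle this case by case for $\mathfrak{sl}(n|m)$ and $\mathfrak{osp}(2|2n)$, using the distinguished simple roots and the condition $\gcd(u,h^\vee)=1$.
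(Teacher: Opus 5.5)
The paper does not prove this statement; it quotes it from \cite{LW26}. The only trace of that argument here is in the proof of Proposition~\ref{prop:gammap}, which uses $\widehat\lambda_p=t_\beta(k\Lambda_0+\rh)-\rh$ with $\beta=-p\zeta$. So your outline is the intended route: take the \cite{GK25} list of simple objects of $\mathcal{O}$ as boundary admissible modules, then test whether the top level is finite-dimensional. There is, however, a genuine gap in Step~3, which carries the whole content of the theorem.

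\textbf{Your base point is wrong.} With $\widehat\lambda+\rh=y\,t_\beta(\rh/u)$, the choice $y=1$, $\beta=0$ gives $\widehat\lambda=\rh/u-\rh=k\Lambda_0+(\tfrac1u-1)\rho$. For $u\ge2$ this pairs to $\tfrac1u-1\notin\Z_{\ge0}$ with every even simple coroot. So in your own parametrisation the vacuum is not $(y,\beta)=(1,0)$. The deduction ``integrality forces $y=1$ and $\beta\perp$ even roots, hence $\widehat\lambda=k\Lambda_0$'' is therefore self-contradictory.

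The correct normalisation is $\widehat\lambda+\rh=t_\beta y(k\Lambda_0+\rh)$. Here $k\Lambda_0+\rh=\tfrac{h^\vee}{u}\Lambda_0+\rho$ is the Weyl vector of $\widehat\Pi_u=\{u\delta-\theta,\alpha_1,\dots,\alpha_n\}$. Its $\fh$-part is $y\rho+\tfrac{h^\vee}{u}\beta$, involving $\rho$ and not $\rho/u$.

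\textbf{Even after this correction, the key claims are only asserted.} You still need to do the following.
\begin{itemize}
\item Show that integrality together with $\gcd(u,h^\vee)=1$ forces $u\mid(\beta,\alpha_i^\vee)$ on the even simple coroots.
\item Combine this with the bounds on $(\beta,\cdot)$ imposed by $t_\beta y(\widehat\Pi_u)\subset\Delta^+$, and with dominance.
\item Identify the surviving solutions modulo the stabiliser of $k\Lambda_0+\rh$, which you mention but never use.
\item In type~I, pin the $\zeta$-component of $\beta$ to $-p\zeta$ with $0\le p\le u-1$, and exclude bases of $\Delta^+_{\widehat\lambda+\rh}$ reached through odd reflections.
\end{itemize}
You explicitly postpone exactly this computation, so part~(i) is not actually proved.

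\textbf{A second error: $L_\fg(\lambda_p)$ is not one-dimensional for $p\neq0$.} Both $\mathfrak{sl}(n|m)$ with $n\neq m$ and $\mathfrak{osp}(2|2n)$ are simple. Hence their only one-dimensional module is the trivial one, whereas $\lambda_p=-\tfrac{ph^\vee}{u}\zeta\neq0$. In fact $(\lambda_p,\alpha)=-\tfrac{ph^\vee}{u}$ for every odd positive root $\alpha$. Using $\gcd(u,h^\vee)=\gcd(u,r^\vee)=1$, one checks that $(\lambda_p+\rho,\alpha)\neq0$ for all such $\alpha$. So $\lambda_p$ is typical, and $L_\fg(\lambda_p)$ is the full Kac module, of dimension $2^{\dim\fg_1}$.

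Your conclusion that the top level is finite-dimensional still holds, but for a different reason. Since $\lambda_p$ vanishes on $\fh^{\mathrm{ss}}$, it is $\fg_{\bar0}$-dominant integral. In type~I this is Kac's entire finite-dimensionality criterion, with no extra condition at the odd simple root; such conditions arise only in type~II.
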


Let $h_p$ be the lowest conformal weight of $L(\widehat\lambda_p)$ and set $H=\{h_0,\dots,h_{u-1}\}$.

\begin{prop}\label{prop:finlen}
Let $\fg$ be a basic classical Lie superalgebra and let $k$ be a boundary principal admissible level of $\widehat{\mathfrak{g}}$.
Then every ordinary $L_{\widehat{\mathfrak{g}}}(k,0)$-module is of finite length.
\end{prop}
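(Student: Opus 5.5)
The plan is to bound the length of an ordinary module by the dimensions of finitely many of its graded pieces, using the classification in Theorem~\ref{thm:LW}. Let $M$ be an ordinary $L_{\widehat{\mathfrak{g}}}(k,0)$-module. Since $M$ is grading-restricted, its $L(0)$-eigenvalues lie in finitely many cosets $\lambda+\Z$, and in each coset there is a lowest one. Let $S\subset\C$ be the finite set $\{h_p+n \mid p=0,\dots,u-1,\ n\in\Z_{\ge0}\}\cap\{\text{eigenvalues of }L(0)\text{ on }M\}$ restricted to the window below. Concretely, I will use the finite-dimensional space
\[
W=\bigoplus_{h\in H}M_{h},
\]
which is finite-dimensional because $H$ is finite and each $M_h$ is finite-dimensional.

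\textbf{Step 1: every nonzero subquotient meets $H$.} Let $N'\subsetneq N\subseteq M$ be submodules. Since $N/N'$ is a nonzero grading-restricted module, it has a vector of minimal conformal weight in some coset. Such a vector is annihilated by all $x(n)$ with $n>0$, so it generates a quotient of a module induced from its lowest weight space. From this quotient one can extract a nonzero irreducible ordinary subquotient. By Theorem~\ref{thm:LW}, any such subquotient is some $L(\widehat\lambda_p)$. More simply, take the lowest weight space $\Omega$ of $N/N'$ in a coset: it is a finite-dimensional $\fg$-module. Pick an irreducible $\fg$-quotient $U$ of a cyclic submodule, and form the corresponding irreducible $L_{\widehat{\mathfrak g}}(k,0)$-module with top $U$, as a subquotient of the submodule generated. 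It is an irreducible ordinary module, hence some $L(\widehat\lambda_p)$, so the lowest weight of $N/N'$ in that coset equals $h_p\in H$. Therefore $(N/N')_{h}\neq0$ for some $h\in H$.

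\textbf{Step 2: the length bound.} Step 1 gives that for any strictly increasing chain of submodules $0=N_0\subsetneq N_1\subsetneq\cdots\subsetneq N_l=M$, each quotient $N_{i+1}/N_i$ has nonzero component in $\bigoplus_{h\in H}$. Since $L(0)$ acts semisimply and the functor $N\mapsto N_h$ is exact, we get $\dim W\ge l$. Hence every chain has length at most $\dim W$. Taking a maximal chain then yields a composition series whose factors are irreducible, in fact irreducible ordinary modules (they are subquotients of $M$ with $L(0)$ acting semisimply). This shows that $M$ has finite length at most $\dim W$.

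\textbf{Main obstacle.} The delicate point is Step 1: producing an irreducible ordinary subquotient of $N/N'$ whose lowest conformal weight is the minimal weight of $N/N'$ in that coset. The plan is to use Zhu-algebra/top-level arguments. The lowest weight space $\Omega$ is a finite-dimensional module for $A(L_{\widehat{\mathfrak g}}(k,0))$, a quotient of $U(\fg)$. Pick $w\in\Omega$ generating an irreducible $A$-quotient, with the irreducible $A$-module $U$ finite-dimensional. The submodule generated by $w$ has top $U(\fg)w$, and its quotient by the maximal submodule meeting the top trivially is irreducible with top $U$. This quotient is ordinary because it is a subquotient of $M$. The classification then forces the lowest weight of $N/N'$ in that coset to equal its lowest conformal weight, namely some $h_p$. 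Care is needed for cosets of $N/N'$ where $\Omega$ is not the global minimum, but the argument is local to each coset, so this is fine.
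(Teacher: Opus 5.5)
Your proposal is correct and is essentially the paper's argument. It uses the same invariant $\sum_{h\in H}\dim M_h$, the same fact from Theorem~\ref{thm:LW} that this invariant is nonzero on every nonzero subquotient, and the same additivity bound on chain lengths. The only difference is how you extract an irreducible subquotient: you use a lowest-weight vector and a top-level construction, whereas the paper takes a maximal proper submodule of a cyclic submodule via Zorn's lemma. For your construction to work as written, choose $w$ inside an irreducible $\fg$-submodule of $\Omega$, so that the top $U(\fg)w$ is irreducible.
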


\begin{proof}
Let $M$ be an ordinary $L_{\widehat{\mathfrak{g}}}(k,0)$-module.
Take $0\neq v\in M$ and consider the cyclic
submodule $M'=L_{\widehat{\mathfrak{g}}}(k,0)v$, then $M'$ is ordinary.
If $\{W_i\}$ is a chain of proper submodules of $M'$, then $\bigcup_i W_i$ is a submodule and it
is proper, otherwise $v\in W_j$ for some $j$ and $M'=L_{\widehat{\mathfrak{g}}}(k,0)v\subseteq W_j$,
a contradiction.
By Zorn's lemma, $M'$ has a maximal proper submodule $M''$, and $M'/M''$ is an
irreducible $L_{\widehat{\mathfrak{g}}}(k,0)$-module. As a quotient of the ordinary module $M'$, the module $M'/M''$
is ordinary, so by Theorem~\ref{thm:LW} it is isomorphic to some $L(\widehat\lambda_p)$.
Then $M$ admits an irreducible
subquotient isomorphic to some $L(\widehat\lambda_p)$.
Define
$\ell(M):=\sum_{h\in H}\dim M_h\in\Z_{\ge0}$.
Since $H$ is finite and each conformal weight space of $M$ is finite-dimensional,
we have $\ell(M)<\infty$. If $N\subsetneq N'\subseteq M$ are submodules, then $N'/N$ is a non-zero
ordinary $L_{\widehat{\mathfrak{g}}}(k,0)$-module, it has an irreducible subquotient
isomorphic to $L(\widehat\lambda_p)$, hence $(N'/N)_{h_p}\neq0$ for some $p$, so $\ell(N'/N)\ge1$.
Since $L(0)$ acts semisimply, $\ell$ is additive on short exact sequences, then
$\ell(N')=\ell(N)+\ell(N'/N)$. Therefore $\ell(N')>\ell(N)$, and any strictly
increasing chain $0=N_0\subsetneq N_1\subsetneq\cdots\subsetneq N_r=M$ satisfies
$\ell(M)=\sum_{i=1}^{r}\ell(N_i/N_{i-1})\ge r$. Thus $r\le\ell(M)<\infty$, i.e., $M$ is of
finite length.
\end{proof}

\begin{rem}
{\rm Let $\mathfrak{g}=osp(1|2n)$ and let $k$ be a boundary subprincipal admissible level of $\fgt$.
From Theorem 4.2 in \cite{CGL24}, the number of irreducible ordinary $L_{\widehat{\mathfrak{g}}}(k,0)$-modules is finite
(see also \cite{GK25}). Similar to Proposition \ref{prop:finlen}, every ordinary $L_{\widehat{\mathfrak{g}}}(k,0)$-module is of finite length.}
\end{rem}

For an ordinary $L_{\widehat{\mathfrak{g}}}(k,0)$-module $M$ and $\lambda\in\C$,
since $[L(0),x(0)]=0$ for all $x\in\fg$,
 the space $M_{\lambda}$ is a
finite-dimensional $\fg$-module. Let
$\fg_{\bar{0}}^{\mathrm{ss}}$ be the semisimple part of the even subalgebra $\fg_{\bar{0}}$, and set
$\fh^{\mathrm{ss}}=\fh\cap\fg_{\bar{0}}^{\mathrm{ss}}$.
By Weyl's complete reducibility theorem, each $M_{\lambda}$ is a
completely reducible module as a $\fg_0^{\mathrm{ss}}$-module,
hence its Cartan subalgebra $\fh^{\mathrm{ss}}(0)$ acts semisimply on $M_{\lambda}$.
Then we have the following lemma.

\begin{lem}\label{lem:central}
Let $\fg$ be a basic classical Lie superalgebra and let $M$ be an
ordinary $L_{\widehat{\mathfrak{g}}}(k,0)$-module, then $\fh^{\mathrm{ss}}(0)$ acts semisimply
on $M$.
\end{lem}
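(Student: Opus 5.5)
The plan is to reduce the statement to the finite-dimensional pieces $M_\lambda$ and apply Weyl's theorem there, exactly as set up in the paragraph before the lemma. Since $M$ is ordinary, $L(0)$ acts semisimply, so $M=\coprod_{\lambda\in\C}M_\lambda$ with $M_\lambda$ the $L(0)$-eigenspaces. Each $M_\lambda$ is finite-dimensional because $M$ is grading-restricted. The relation $[L(0),x(0)]=0$ for all $x\in\fg$ follows from the commutator formula and the fact that $x(-1)\mathbf{1}$ has weight one. Hence every $x(0)$, $x\in\fg$, preserves each $M_\lambda$, and $M_\lambda$ becomes a finite-dimensional module for $\fg$ via $x\mapsto x(0)$. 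It is a genuine $\fg$-module, because the modes $x(0)$ satisfy $[x(0),y(0)]=[x,y](0)$: the central term $k\,m(x,y)\delta_{m+n,0}$ vanishes at $m=0$.

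Next I restrict $M_\lambda$ to the even semisimple subalgebra $\fg_{\bar 0}^{\mathrm{ss}}=[\fg_{\bar0},\fg_{\bar0}]$. This subalgebra is an honest finite-dimensional semisimple Lie algebra over $\C$, so Weyl's complete reducibility theorem gives a decomposition of $M_\lambda$ into finite-dimensional irreducible $\fg_{\bar0}^{\mathrm{ss}}$-modules. On any finite-dimensional representation of a semisimple Lie algebra, a Cartan subalgebra acts diagonalizably. This holds because the abstract Jordan decomposition is preserved under representations, and elements of a Cartan subalgebra are ad-semisimple. Alternatively, one can use weight-space theory for finite-dimensional irreducible modules.

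Here $\fh^{\mathrm{ss}}=\fh\cap\fg_{\bar0}^{\mathrm{ss}}$ is a Cartan subalgebra of $\fg_{\bar0}^{\mathrm{ss}}$, since $\fh$ is a Cartan subalgebra of $\fg_{\bar0}$ and $\fg_{\bar0}=\fg_{\bar0}^{\mathrm{ss}}\oplus Z(\fg_{\bar0})$. Therefore the commuting operators $h(0)$, $h\in\fh^{\mathrm{ss}}$, are simultaneously diagonalizable on each $M_\lambda$. Taking the direct sum over $\lambda$ then shows that $\fh^{\mathrm{ss}}(0)$ acts semisimply on all of $M$.

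The only point requiring care is the identification of $\fh^{\mathrm{ss}}$ as a Cartan subalgebra of $\fg_{\bar0}^{\mathrm{ss}}$. It follows from the decomposition of the reductive algebra $\fg_{\bar0}$: the center lies in $\fh$, so $\fh=\fh^{\mathrm{ss}}\oplus Z(\fg_{\bar0})$. In the type II or Lie algebra case the center is zero, and $\fh^{\mathrm{ss}}=\fh$. Beyond this the argument is routine. The real obstacle, namely semisimplicity of the central element $z(0)$, is deliberately excluded from this lemma and handled later.
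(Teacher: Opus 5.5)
Your proposal is correct and follows essentially the same route as the paper. Both arguments pass to the finite-dimensional $L(0)$-eigenspaces $M_\lambda$, which are $\fg$-modules because $[L(0),x(0)]=0$, apply Weyl's complete reducibility theorem to $\fg_{\bar0}^{\mathrm{ss}}$ so that its Cartan subalgebra $\fh^{\mathrm{ss}}$ acts diagonalizably on each $M_\lambda$, and conclude on $M$; your extra checks (the central term vanishing on zero modes, and $\fh=\fh^{\mathrm{ss}}\oplus Z(\fg_{\bar0})$ with $\fh^{\mathrm{ss}}$ a Cartan subalgebra of $\fg_{\bar0}^{\mathrm{ss}}$) simply make explicit what the paper leaves implicit.
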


Let $\fg$ be a basic classical Lie superalgebra of type I, there is a $\Z$-grading
$\fg=\fg_{-1}\oplus\fg_0\oplus\fg_1$, where $\fg_0=\fg_{\bar{0}}$.
Note that if $\mathfrak{g}$ is of type $A(n,n)$, then $h^\vee=0$,
$\widehat{\mathfrak{g}}$ does not admit boundary admissible levels,
we assume $\mathfrak{g}\ne psl(n|n)$ in this paper.
Then $\fg_0=\fg_{\bar{0}}^{\mathrm{ss}}\oplus\C z$ and $\fh=\fh^{\mathrm{ss}}\oplus\C z$,
where $z$ spans the center of $\fg_0$ and is normalized so that $[z,x]=jx$ for $x\in\fg_j$. By
Lemma~\ref{lem:central}, the semisimplicity of $\fh(0)$ is equivalent to that of
$z(0)$.

\begin{lem}\label{lem:Nend}
Let $\fg$ be a basic classical Lie superalgebra of type I. Let $M$ be an ordinary $L_{\fgt}(k,0)$-module and let $z(0)=z_s(0)+z_n(0)$ be the Jordan-Chevalley decomposition of
$z(0)$ on $M$. Then
$z_n(0)\in\End_{L_{\widehat{\mathfrak{g}}}(k,0)}(M)$.
\end{lem}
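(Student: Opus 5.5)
Since $M$ is ordinary, each conformal weight space $M_\lambda$ is finite-dimensional and $z(0)$ preserves it, because $[L(0),z(0)]=0$. So $z(0)$ is locally finite on $M$. The Jordan--Chevalley decomposition is formed weight space by weight space: on each $M_\lambda$, $z_s(0)$ and $z_n(0)$ are polynomials in $z(0)|_{M_\lambda}$. Let $M^{(\mu)}$ denote the generalized $\mu$-eigenspace of $z(0)$ on $M$ (a direct sum over conformal weights). Then $z_s(0)$ acts on $M^{(\mu)}$ by the scalar $\mu$, and $z_n(0)=z(0)-z_s(0)$. Since $z_n(0)$ is built from $z(0)$, it is even, so it suffices to show that $z_n(0)$ commutes with $Y(a,x)$ for all $a\in L_{\fgt}(k,0)$.

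Because $L_{\fgt}(k,0)$ is generated by the elements $x(-1)\mathbf{1}$, $x\in\fg$, and the commutant of a generating set is closed under normally ordered products and iterate formulas, it is enough to show that $z_n(0)$ commutes with every $x(m)$, $x\in\fg$, $m\in\Z$. I will use the $\Z$-grading $\fg=\fg_{-1}\oplus\fg_0\oplus\fg_1$ and take $x\in\fg_j$ homogeneous. The invariant form pairs $\fg_1$ with $\fg_{-1}$ and $\fg_0$ with itself, so $(z,x)=0$ for $x\in\fg_{\pm1}$. For $x\in\fg_{0}^{\mathrm{ss}}$ we have $(z,x)=0$ as well, since $(z,[a,b])=([z,a],b)=0$. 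Hence for all $x\in\fg_j$ other than $z$ itself,
\[
[z(0),x(m)]=[z,x](m)+0\cdot mk\,\delta_{m,0}=j\,x(m).
\]
For $x=z$, the commutator is $[z(0),z(m)]=m(z,z)k\delta_{m,0}=0$.

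So $x(m)$ maps $M^{(\mu)}$ into $M^{(\mu+j)}$. This follows from
\[
(z(0)-\mu-j)^N x(m)=x(m)(z(0)-\mu)^N .
\]
Therefore
\[
z_s(0)x(m)v=(\mu+j)x(m)v=x(m)(z_s(0)+j)v
\]
for $v\in M^{(\mu)}$, that is, $[z_s(0),x(m)]=j\,x(m)=[z(0),x(m)]$. Subtracting gives $[z_n(0),x(m)]=0$. The case $x=z$ is immediate, since $z(m)$ commutes with $z(0)$ and hence with the polynomials in $z(0)$ on each finite-dimensional piece. The only point I expect to need care is that the decomposition is global rather than defined only weight space by weight space. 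This is handled because $x(m)$ maps $M_\lambda$ to $M_{\lambda-m}$, and the eigenspace shift computation above holds uniformly across weight spaces.
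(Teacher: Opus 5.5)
Your proof is correct and follows essentially the same route as the paper: both show that $x(m)$ maps the generalized $\mu$-eigenspace of $z(0)$ into the generalized $(\mu+j)$-eigenspace, so $z_s(0)$ obeys the same commutation relation with $x(m)$ as $z(0)$ does, and therefore $z_n(0)$ commutes with every $x(m)$. Your separate treatment of $x=z$ and the check that $(z,x)=0$ are unnecessary, since the central term in $[z(0),x(m)]$ vanishes automatically (the mode index of $z$ is $0$), so $[z(0),x(m)]=j\,x(m)$ holds uniformly for all $x\in\fg_j$.
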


\begin{proof}
Since $[L(0),z(0)]=0$, $z(0)$ preserves each finite-dimensional conformal
weight space $M_{\lambda}$, so the Jordan-Chevalley decomposition exists on each $M_{\lambda}$ and
induces a global decomposition $z(0)=z_s(0)+z_n(0)$ on $M$. Fix $x\in\fg_j$, $n\in\Z$ and
$v\in M_{\lambda}^{(c)}$, where $M_{\lambda}^{(c)}$ is the generalized $c$-eigenspace of $z(0)$ on $M_{\lambda}$.
From $[z(0),x(n)]=[z,x](n)=jx(n)$ we get
$\big(z(0)-(c+j)\big)x(n)v=x(n)\big(z(0)-c\big)v$.
Then $\big(z(0)-(c+j)\big)^r x(n)v=x(n)\big(z(0)-c\big)^r v$ for all $r\ge1$,
so $x(n)M_{\lambda}^{(c)}\subseteq M_{\lambda-n}^{(c+j)}$ and
$z_n(0)x(n)v=x(n)z_n(0)v$. Hence $z_n(0)\in\End_{L_{\widehat{\mathfrak{g}}}(k,0)}(M)$.
\end{proof}

Let $\Oinf_{k}(\fgt)$ be the category of $\fgt$-modules of level $k$
with the Cartan subalgebra
$\fht$ acting semisimply and the positive part of $\fgt$ acting locally finitely.

\begin{thm}[\cite{GK25}]\label{thm:gk}
Let $\fg$ be a basic classical Lie superalgebra except $D(2|1;a)$ and
let $k$ be a boundary admissible level of $\fgt$. Then every weak $L_{\fgt}(k,0)$-module in
$\Oinf_{k}(\fgt)$ is completely reducible.
\end{thm}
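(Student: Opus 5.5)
The plan is to reduce complete reducibility in $\Oinf_{k}(\fgt)$ to three facts. First, the irreducible weak $L_{\fgt}(k,0)$-modules in $\Oinf_k(\fgt)$ are highest weight modules $L(\lambda)$, with $\lambda$ running over the finite set of boundary admissible weights of level $k$. Second, every highest weight $\fgt$-module that is an $L_{\fgt}(k,0)$-module is irreducible. Third, $\Ext^1$ between any two such $L(\lambda)$'s vanishes inside $\Oinf_k(\fgt)$. Complete reducibility then follows by a standard socle argument, given below.

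\textbf{Step 1: locating highest weights.} Let $M$ be a nonzero weak module in $\Oinf_k(\fgt)$. Since the positive part acts locally finitely and $\fht$ acts semisimply, $M$ contains a nonzero weight vector annihilated by $\fgt_+$, that is, a singular vector. It generates a highest weight quotient of some $M(\lambda)$. Its irreducible quotient $L(\lambda)$ is then an $L_{\fgt}(k,0)$-module, so $\lambda$ must be killed by the image of the maximal ideal of $V^k(\fg)$ in the Zhu algebra. I would use the Gorelik--Kac description of this ideal at boundary level, namely the singular vector of the vacuum module together with Kac--Wakimoto character formulas for boundary admissible modules. The goal is to show that $\lambda$ belongs to the finite set $\{\lambda^{(1)},\dots,\lambda^{(N)}\}$ of boundary admissible weights of level $k$.

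\textbf{Step 2: irreducibility and unlinkedness.} If $\lambda,\mu$ are two such admissible weights with $\mu\neq\lambda$, I claim $\mu\notin U(\lambda)$. To see this, I would use Lemma~\ref{lem:U}(i): any composition factor $L(\mu)$ of $M(\lambda)$ has $\Delta_{\mu+\rh}=\Delta_{\lambda+\rh}$ and $\lambda-\mu\in\Z_{\ge0}\Delta_{\lambda+\rh}$. At boundary level the integral root system $\Delta_{\lambda+\rh}$ is as small as possible, essentially a translate by some $t_\beta$ and $y\in W$ of the system for $k\Lambda_0$. For such systems the admissible weights lie in distinct $\Delta_{\lambda+\rh}$-dot-orbits of dominant type, which excludes linkage. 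Granting this, any highest weight $L_{\fgt}(k,0)$-module $N$ of highest weight $\lambda$ is irreducible. Indeed, a proper submodule would contain a singular vector of weight $\mu<\lambda$ with $\mu\in U(\lambda)$. Then $L(\mu)$ would be an $L_{\fgt}(k,0)$-subquotient, so $\mu$ is admissible by Step 1, contradicting unlinkedness.

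\textbf{Step 3: vanishing of extensions.} Let $0\to L(\mu)\to E\to L(\lambda)\to0$ be exact in $\Oinf_k(\fgt)$ with $E$ an $L_{\fgt}(k,0)$-module, and suppose $\mu\not>\lambda$. Then a weight vector lifting $v_\lambda$ is singular, since $\fht$ acts semisimply and $\lambda+\alpha$ is not a weight of $E$ for positive $\alpha$. It generates a highest weight $L_{\fgt}(k,0)$-submodule, which is irreducible by Step 2, and this gives a splitting. The case $\mu>\lambda$ follows by the dual argument, using the contragredient duality on $\Oinf$, which preserves $L_{\fgt}(k,0)$-modules. For $\mu=\lambda$ the space $E_\lambda$ is two-dimensional and consists of singular vectors, again because $\fht$ is semisimple (here $\nu=0$ in the language of $\Upsilon_\lambda$). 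Each of these singular vectors generates an irreducible copy of $L(\lambda)$, so $E$ splits.

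\textbf{Step 4: socle argument.} Let $S\subseteq M$ be the sum of all irreducible submodules; it is semisimple. If $M/S\ne0$, then by Step 1 it contains some $L(\lambda)$, generated by a singular vector $\bar w$. Choose a weight lift $w$ of $\bar w$ and apply Step 3 to the finitely many $L(\mu)$-components of $S$ that $U(\fgt)w$ meets; local finiteness guarantees only finitely many are involved. This splits off a copy of $L(\lambda)$ inside $M$ not contained in $S$, a contradiction. Hence $M=S$ is completely reducible.

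The main obstacle is Step 2 together with the finiteness claim in Step 1. This requires an explicit determination of $\Delta_{\lambda+\rh}$ for boundary admissible $\lambda$, and a proof that distinct admissible weights are not linked. This is also where $D(2|1;a)$ must be excluded, since its boundary levels are not classified. I would first try to handle this by reducing, via the translation $t_\beta$ and the finite Weyl group, to the vacuum weight $k\Lambda_0$, where the structure of $M(k\Lambda_0)$ is controlled by a single singular vector.
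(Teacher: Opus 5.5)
The paper gives no proof of this statement; it is quoted from \cite{GK25}. Your Steps 3 and 4 are a sound standard reduction. In Step 4, the finiteness you need is that $U(\fgt)w$ has a finite filtration by highest weight subquotients, each an $L_{\fgt}(k,0)$-module and hence irreducible by Step 2. In Step 3 you should check that the duality on $\mathcal{O}$ preserves $L_{\fgt}(k,0)$-modules. The genuine gap is that the whole content of the theorem lies in Steps 1 and 2, and neither is proved: you outline a strategy and then write ``granting this''. Step 1 is the classification part of the rationality result of \cite{GK25}. Even granting that the maximal ideal of $V^k(\fg)$ is described by a singular vector, deciding exactly which highest weights are annihilated by its image in the Zhu algebra is the hard computation. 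The Kac--Wakimoto character of $L_{\fgt}(k,0)$ does not by itself tell you which $L(\lambda)$ are $L_{\fgt}(k,0)$-modules. Also, the set of possible highest weights is finite only modulo $\C\delta$, since $d$ is not part of the vertex algebra action.

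The sketch for Step 2 has a concrete flaw. ``Admissible weights are regular dominant for the same positive system $\Delta^+_{\lambda+\rh}$, so they lie in distinct dot-orbits'' is the Lie algebra argument, and it controls only linkage through the integral Weyl group. For a Lie superalgebra, $U(\lambda)$ also contains chains with isotropic steps $\mu\mapsto\mu-\gamma$, $(\mu,\gamma)=0$, which are not dot-actions of Weyl group elements. Dominance gives no protection against these: already for $\mathfrak{gl}(1|1)$, where every weight is dominant, the Verma module of an atypical $\lambda$ contains $L(\lambda-\alpha)$.

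At boundary level such steps are always available whenever $\fg$ has isotropic roots. For an isotropic simple root $\alpha$ one has $(k\Lambda_0+\rh,\alpha)=(\rho,\alpha)=0$, so $\lambda+\rh=w(k\Lambda_0+\rh)$ is orthogonal to the real isotropic root $\pm w\alpha$; compare Proposition~\ref{prop:gammap}(ii). Lemma~\ref{lem:U}(i) gives only $\Delta_{\mu+\rh}=\Delta_{\lambda+\rh}$ and $\lambda-\mu\in\Z_{\ge0}\Delta_{\lambda+\rh}$. That does not exclude reaching another admissible weight by a mixed chain of reflections and isotropic steps.

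Moreover, $\mu\notin U(\lambda)$ is stronger than you need, because $U(\lambda)$ is only an upper bound for the composition factors of $M(\lambda)$. What is actually required is that no admissible $\mu\neq\lambda$ occurs as a composition factor of a highest weight $L_{\fgt}(k,0)$-module of highest weight $\lambda$. Proving this needs genuinely super information about these modules, such as characters or the structure of the maximal ideal. That is the substantive part of the cited theorem, and your proposal leaves it open.
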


\section{Finite semisimple category $\mathcal{O}_{k}^{ord}(\fg)$}

In this section,
let $k$ be a boundary admissible level,
we first prove the semisimplicity of the action of a Cartan subalgebra of $\fgt$ on ordinary $L_{\widehat{\mathfrak{g}}}(k,0)$-modules,
then we prove that the category $\mathcal{O}_{k}^{ord}(\fg)$ of ordinary $L_{\widehat{\mathfrak{g}}}(k,0)$-modules is finite and semisimple.

Let $\fg$ be a basic classical Lie superalgebra of type I and let $\alpha_{odd}$ be the odd isotropic simple root of $\fg$ (for
$\mathfrak{sl}(n|m)$ it is the simple root $\alpha_n$
and for $\mathfrak{osp}(2|2n)$ it is the simple root $\alpha_1$). Let $z$ be the
central element of $\fg_0$ which is normalized by $[z,x_{\alpha_{odd}}]=x_{\alpha_{odd}}$, and
let $\zeta\in\fh^{*}$ be the weight dual to $z$, i.e.,
$\zeta(h)=(z,h)$ for all $h\in\fh$, then
$(\zeta,\alpha_{odd})=\alpha_{odd}(z)=1$.

\begin{prop}\label{prop:gammap}
Let $\fg$ be a basic classical Lie superalgebra of type I and let $k=\tfrac{h^{\vee}}{u}-h^{\vee}$ be a boundary admissible level of $\fgt$.
For $p=0,\dots,u-1$, set $\gamma_p:=\alpha_{odd}+p\delta$, where $\delta$ is the imaginary root of affine Lie superalgebra $\fgt$. Then\\
{\rm(i)} $\gamma_p\in\Delta_{\widehat\lambda_p+\rh}^{+}$ is isotropic and indecomposable in
$\Delta_{\widehat\lambda_p+\rh}^{+}$;\\
{\rm(ii)} $(\widehat\lambda_p+\rh,\gamma_p)=0$;\\
{\rm(iii)} $(\zeta,\gamma_p)=1$.
\end{prop}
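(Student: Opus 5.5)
The plan is to verify (ii) and (iii) by direct pairing computations, and then obtain (i) by confining $\Delta_{\widehat\lambda_p+\rh}$ inside an explicit set of real roots defined by a congruence. I work with $\fg=\mathfrak{sl}(n|m)$; the case $\mathfrak{osp}(2|2n)$ is identical after replacing $\Lambda_n$ by $\Lambda_1$. I use the normalization under which, for the isotropic simple root, $(\Lambda_n,\alpha_{odd})=\Lambda_n(\alpha_{odd}^\vee)=1$.

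\textbf{Parts (iii) and (ii).} Part (iii) is immediate: $\zeta\in\fh^*$ is orthogonal to $\delta$, so $(\zeta,\gamma_p)=(\zeta,\alpha_{odd})=1$. For (ii), write $\rh=h^\vee\Lambda_0+\rho$ and use the following facts:
\begin{itemize}
\item $(\rho,\alpha_{odd})=\tfrac12(\alpha_{odd},\alpha_{odd})=0$;
\item $(\Lambda_0,\alpha_{odd})=0$ and $(\Lambda_n,\delta)=0$;
\item $(\mu,\delta)=\mu(K)$, and $(\widehat\lambda_p+\rh)(K)=k+h^\vee=h^\vee/u$.
\end{itemize}
These give
\[
(\widehat\lambda_p+\rh,\gamma_p)=-\tfrac{ph^\vee}{u}(\Lambda_n,\alpha_{odd})+p\,\tfrac{h^\vee}{u}=0 .
\]
Isotropy of $\gamma_p$ is clear, since $\delta$ is null and orthogonal to $\alpha_{odd}$. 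Also $\gamma_p\in\Delta^+$, because $p\ge0$ and $\alpha_{odd}>0$. The isotropic integrality condition is the single equation $(\widehat\lambda_p+\rh,\gamma_p)=0$, which we just checked. Hence $\gamma_p\in\bar R_{\widehat\lambda_p+\rh}\subseteq\Delta_{\widehat\lambda_p+\rh}^+$.

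\textbf{Containment in $S_p$.} For indecomposability, let $j(\alpha)\in\{-1,0,1\}$ be the $z$-eigenvalue of a root $\alpha$ of $\fg$, i.e.\ its $\Z$-grading degree. Define
\[
S_p=\{\alpha+m\delta\in\Delta^{\mathrm{re}}\mid m\equiv p\,j(\alpha)\ (\mathrm{mod}\ u)\}.
\]
I will show $\Delta_{\widehat\lambda_p+\rh}\subseteq S_p$, in two steps.
\begin{enumerate}
\item $\bar R_{\widehat\lambda_p+\rh}\subseteq S_p$. Compute $(\widehat\lambda_p+\rh,\alpha+m\delta)=(\rho,\alpha)-\tfrac{ph^\vee}{u}(\Lambda_n,\alpha)+m\tfrac{h^\vee}{u}$. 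Here $(\Lambda_n,\alpha)$ is essentially $j(\alpha)$ up to the normalization. The integrality conditions, combined with $\gcd(u,h^\vee)=\gcd(u,r^\vee)=1$, then force $m\equiv p\,j(\alpha)$ mod $u$. For isotropic roots the condition is the equality to $0$, which gives the same congruence.
\item $S_p$ satisfies the closure properties (ii)--(iv) in the definition of $\Delta_{\lambda+\rh}$. The defining condition is the congruence of the linear functional $\alpha+m\delta\mapsto m-p\,j(\alpha)$ modulo $u$. Reflections and the sums and differences $\alpha\pm\beta$ appearing in (iii) are integral linear combinations of elements of $S_p$, and they are again real roots of the form $\alpha'+m'\delta$. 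Since $j$ is additive, the congruence is preserved.
\end{enumerate}
By minimality of $\Delta_{\widehat\lambda_p+\rh}$, the containment follows.

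\textbf{Indecomposability.} Suppose $\gamma_p=\sum_i c_i\beta_i$ with $c_i\in\Z_{>0}$ and $\beta_i=\alpha^i+m_i\delta\in S_p\cap\Delta^+$. Then $\sum c_im_i=p$ and $\sum c_ij(\alpha^i)=1$. Positivity and the congruence, together with $0\le p<u$, bound each $m_i$ from below:
\begin{itemize}
\item $j=0$ gives $m_i=0$ or $m_i\ge u$;
\item $j=1$ gives $m_i\ge p$;
\item $j=-1$ gives $m_i\ge u-p$ if $p>0$, and $m_i\ge u$ if $p=0$, since $m_i=0$ would make $\beta_i$ a negative root.
\end{itemize}
Counting the total $\delta$-coefficient $p$, every term with $j=-1$ must be balanced by at least two terms with $j=1$, so the total would exceed $p$. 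Hence there are no $j=-1$ terms. Likewise all $j=0$ terms have $m_i=0$ and are positive even roots of $\fg$, and there is exactly one $j=1$ term, with $m=p$ and coefficient $1$. This leaves $\alpha_{odd}=\alpha'+(\text{sum of positive roots of }\fg)$. Since $\alpha_{odd}$ is simple, this forces the decomposition to be trivial.

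I expect the main obstacle to be step 1 of the containment: the integrality and congruence bookkeeping for even roots when $h^\vee/u$ is not integral or the form has two root lengths. This is exactly where the conditions $\gcd(u,h^\vee)=\gcd(u,r^\vee)=1$ enter. I would first check it for $\mathfrak{sl}(n|m)$, where the roots are $\epsilon_i-\epsilon_j$, $\delta_i-\delta_j$ and $\pm(\epsilon_i-\delta_j)$, and then handle $\mathfrak{osp}(2|2n)$ similarly.
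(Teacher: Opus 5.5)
Your proof is correct, but for (i) it takes a different route from the paper.

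\textbf{The paper's route.} The paper writes $\widehat\lambda_p+\rh=t_\beta(k\Lambda_0+\rh)$ with $\beta=-p\zeta$, using the translation form of $\widehat\lambda_p$ from the classification. It notes that $\gamma_p=t_\beta(\alpha_{odd})$ and transports the known base $\widehat\Pi_u=\{u\delta-\theta,\alpha_1,\dots,\alpha_n\}$ of $\Delta^+_{k\Lambda_0+\rh}$ to $t_\beta\widehat\Pi_u$. Indecomposability is then read off from $\gamma_p$ being a simple root, and (ii) comes for free because $t_\beta$ is an isometry.

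\textbf{Your route.} You work directly from the definition. Minimality gives the upper bound $\Delta_{\widehat\lambda_p+\rh}\subseteq S_p$, and a $\delta$-coefficient count shows $\gamma_p$ is indecomposable even in the larger set $S_p\cap\Delta^+$. Your set is exactly the paper's translate: $S_p=t_{-p\zeta}(S_0)$ with $S_0=\{\alpha+m\delta\in\Delta^{\mathrm{re}}\mid u\mid m\}$. Since you only need containment, you avoid three inputs:
\begin{itemize}
\item the translation formula for $\widehat\lambda_p$;
\item the fact that $\widehat\Pi_u$ is a base;
\item the check, left implicit in the paper, that $t_\beta\widehat\Pi_u\subset\Delta^+$ (this is where $p<u$ enters, via $t_\beta(u\delta-\theta)=(u-p)\delta-\theta$).
\end{itemize}

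\textbf{Your Step 1.} The price is Step 1, which the paper hides inside the cited base result. It does go through.
\begin{itemize}
\item For even $\alpha$: in type I, $\rho_{\bar 1}$ is $\fg_{\bar 0}$-invariant, so $2(\rho,\alpha)/(\alpha,\alpha)\in\Z$. The condition reduces to $2mh^\vee/(u(\alpha,\alpha))\in\Z$, and $\gcd(u,h^\vee)=\gcd(u,r^\vee)=1$ gives $u\mid m$. The short roots of $\mathfrak{osp}(2|2n)$, with $h^\vee=n$ and $(\alpha,\alpha)=1$, are exactly where $r^\vee=2$ matters.
\item For isotropic $\alpha$: $(\rho,\alpha)\in\frac12\Z$, integral for $\mathfrak{sl}(n|m)$ and possibly half-integral for $\mathfrak{osp}(2|2n)$. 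Integrality of $m-pj(\alpha)=-u(\rho,\alpha)/h^\vee$ then forces it into $u\Z$ by the same gcd conditions.
\end{itemize}

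\textbf{Two small points.}
\begin{itemize}
\item In the paper's convention $(\Lambda_n,\delta)=\Lambda_n(K)=1$, not $0$; that is precisely why $\widehat\lambda_p$ has level $k$. You never actually use that bullet. Your computations only use $(\Lambda_n,\alpha)=(\zeta,\alpha)=j(\alpha)$ and the level $h^\vee/u$ of $\widehat\lambda_p+\rh$, and both are correct.
\item Your counting sentence about each $j=-1$ term being balanced by two $j=1$ terms is imprecise. The clean version: if $b$ is the total multiplicity of the $j=-1$ terms, then the $j=1$ terms have total multiplicity $b+1$. Hence $p\ge (b+1)p+b(u-p)=p+bu$ for $p>0$, and $0\ge bu$ for $p=0$. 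Either way $b=0$.
\end{itemize}
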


\begin{proof}
From the proof of Theorem 4.4 and Theorem 4.5 in \cite{LW26}, the admissible weight
$\widehat\lambda_p=t_{\beta}(k\Lambda_0+\rh)-\rh$ for some $\beta\in \mathfrak{h}^*$, where $t_{\beta}$ is the translation element of the affine Weyl group associated with $\beta$.
Then $\widehat\lambda_p=k\Lambda_0+(k+h^{\vee})\beta-((k\Lambda_0+\rh,\beta)+\frac{1}{2}(k+h^{\vee})(\beta,\beta))\delta.$
By Theorem \ref{thm:LW},
for $\fg=\mathfrak{sl}(n|m)$,
$\widehat\lambda_p=\Big(\frac{p+1}{u}-1\Big)h^{\vee}\,\Lambda_0-\frac{p\,h^{\vee}}{u}\Lambda_n$,
and for $\fg=\mathfrak{osp}(2|2n)$ one replaces $\Lambda_n$ by $\Lambda_1$.
Comparing the restrictions to $\fh$ of the two expressions of
$\widehat\lambda_p$ above, we get
$(k+h^{\vee})\beta=-\frac{p\,h^{\vee}}{u}\,\zeta,\text{i.e.}, \beta=-p\,\zeta$.
Since $t_{\beta}(\alpha_{odd})=\alpha_{odd}-(\beta,\alpha_{odd})\delta$ and
$(\zeta,\alpha_{odd})=1$, we obtain
$\gamma_p=t_{\beta}(\alpha_{odd})$.
Since $\Delta_{\widehat\lambda_p+\rh}=\Delta_{t_{\beta}(k\Lambda_0+\rh)}=t_{\beta}\Delta_{k\Lambda_0+\rh}$ and
$\alpha_{odd}$ belongs to the base $\widehat\Pi_u=\{u\delta-\theta,\alpha_1,\cdots,\alpha_n\}$ of $\Delta_{k\Lambda_0+\rh}^{+}$.
Then $\gamma_p=t_{\beta}(\alpha_{odd})$ belongs to the base $t_{\beta}\widehat\Pi_u$ of $\Delta_{\widehat\lambda_p+\rh}^+$,
hence $\gamma_p$ is indecomposable in
$\Delta_{\widehat\lambda_p+\rh}^{+}$.
Since $(\gamma_p,\gamma_p)=(\alpha_{odd},\alpha_{odd})+2p(\alpha_{odd},\delta)+p^2(\delta,\delta)=0$,
we have $\gamma_p$ is isotropic.
Since $t_{\beta}$ preserves the bilinear form, we have
$(\widehat\lambda_p+\rh,\gamma_p)
=\big(t_{\beta}(k\Lambda_0+\rh),\,t_{\beta}(\alpha_{odd})\big)
=(k\Lambda_0+\rh,\alpha_{odd})
=(\rho,\alpha_{odd})=0$ and
$(\zeta,\gamma_p)=(\zeta,\alpha_{odd})+p(\zeta,\delta)=(\zeta,\alpha_{odd})=1$.
\end{proof}

\begin{lem}\label{prop:hiorder}
Let $\fg$ be a basic classical Lie superalgebra of type I and let $k$ be a boundary admissible level of $\fgt$.
Let $M$ be an ordinary $L_{\widehat{\mathfrak{g}}}(k,0)$-module and let $z(0)=z_s(0)+z_n(0)$ be the Jordan-Chevalley
decomposition of $z(0)$ on $M$. If $z_n(0)\neq0$, then there exists an ordinary module $L(\widehat\lambda_p)$ and its
self-extension
$0\longrightarrow L(\widehat\lambda_p)\longrightarrow E_M\longrightarrow L(\widehat\lambda_p)\longrightarrow0$ in the category $\mathcal{O}_{k}^{ord}(\fg)$
such that the action of $z_n(0)$ on $E_M$ is non-zero, where the action of $z_n(0)$ on $E_M$ is induced from the action of $z_n(0)$ on $M$.
\end{lem}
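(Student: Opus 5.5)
We use the fact that $N:=z_n(0)$ is a nilpotent $L_{\fgt}(k,0)$-module endomorphism of $M$ (Lemma~\ref{lem:Nend}) and extract from $M$ a length-two subquotient on which $N$ survives. Since $N\neq0$ and $N$ is nilpotent on each finite-dimensional $M_\lambda$, we take the largest $r\ge2$ with $N^{r-1}\neq0$ and $N^r=0$. Replacing $M$ by $M/\Ker N^{r-2}$ and $N$ by its induced action, we may assume $N\neq0$ and $N^2=0$. These are module quotients because $\Ker N^{j}$ is an $L_{\fgt}(k,0)$-submodule. On this quotient $N\neq0$: if $N^{r-1}v\neq0$, then the class of $N^{r-2}v$ is mapped by $N$ to the nonzero class of $N^{r-1}v$. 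The quotient is again ordinary, and the induced action is still the nilpotent part of the Jordan--Chevalley decomposition, since $z_s(0)$ and $z_n(0)$ are polynomials in $z(0)$ on each $M_\lambda$.

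Assume now $N^2=0$ and $N\neq0$. Then $\im N\subseteq\Ker N$ are submodules, and $N$ induces a nonzero module map $\bar N: M/\Ker N\to \im N$. By Proposition~\ref{prop:finlen}, $M/\Ker N$ has finite length. We choose a composition-series step $A\subset B\subseteq M/\Ker N$ such that $\bar N(A)=0$ and $\bar N(B)\neq0$; this is possible by taking the first step of the series on which $\bar N$ becomes nonzero and restricting to $B$. Then $B/A\cong L(\widehat\lambda_p)$ for some $p$ by Theorem~\ref{thm:LW}. The image $\bar N(B)$ is a quotient of $B/A$, hence is isomorphic to $L(\widehat\lambda_p)$ as an irreducible submodule of $M$.

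Next we lift $B$ to a submodule $\tilde B\subseteq M$ containing $\Ker N$, and set $C:=\tilde B\cap N^{-1}(0)+\cdots$. It is cleaner to take $E_M:=\big(\tilde B'+N(\tilde B')\big)/K$, where $\tilde B'$ is the preimage of $B$, and $K:=\tilde A'+\big(\Ker N\cap(\ldots)\big)$ is chosen so that $N(\tilde B')\cap K=0$ modulo the kernel. Concretely, we take $X:=\tilde B'$, which contains $\Ker N\supseteq N(X)$. We pick a maximal submodule $Y\subseteq\Ker N$ with $Y\cap N(X)=0$; this exists by finite length. Since $N(X)=\bar N(B)$ is irreducible, $N(X)$ embeds into $\Ker N/Y$. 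We then set $E_M:=X/(\tilde A'\cap\ldots)$.

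To keep the construction manageable, we take $E_M:=X'/Y'$ where:
\begin{itemize}
\item[] $X'\subseteq X$ is minimal among submodules with $N(X')=N(X)$,
\item[] $Y'$ is a maximal submodule of $X'$ containing $X'\cap\Ker N\cap(\text{complement data})$ but not $N(X)$.
\end{itemize}
The goal is a two-step filtration $N(X)\subset X''\subseteq E$ with $E/N(X)\cong L(\widehat\lambda_p)$, on which $N$ maps the top onto the bottom. This is a length-bookkeeping step. First, minimality of $X'$ forces $X'/(X'\cap\Ker N)\cong L(\widehat\lambda_p)$. Second, by finite length we may quotient $X'\cap\Ker N$ by a submodule $Y'$ that avoids $N(X')$ and is maximal with that property. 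The quotient $(X'\cap\Ker N)/Y'$ then has $N(X')$ as its essential socle. After this step $E_M=X'/Y'$ has $N$ inducing a nonzero map $E_M\to E_M$ with image $N(X')\cong L(\widehat\lambda_p)$.

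The main obstacle is ensuring that $E_M$ has length exactly two, that is, that $(X'\cap\Ker N)/Y'$ equals $N(X')$. We would attack this by first using Theorem~\ref{thm:gk}: under it, any portion of $E_M$ on which $z(0)$, and hence $\fht$, acts semisimply is in $\Oinf_k$ and therefore completely reducible. Since $N$ vanishes on $X'\cap\Ker N$, $z(0)=z_s(0)$ there, so $X'\cap\Ker N$ is completely reducible. Being completely reducible, it splits as $N(X')\oplus Y'$, which gives $(X'\cap\Ker N)/Y'=N(X')$. Then $0\to L(\widehat\lambda_p)\to E_M\to L(\widehat\lambda_p)\to0$ is exact in $\mathcal{O}_k^{ord}(\fg)$, and $z_n(0)$ acts on $E_M$ by the induced nonzero map.
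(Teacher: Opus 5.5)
Your strategy is the paper's: reduce to a square-zero $N=z_n(0)$, apply Theorem~\ref{thm:gk} to the kernel of $N$ (where $z(0)=z_s(0)$) to split off a complement of a simple piece of $\im N$, and pass to a length-two subquotient. The reduction step, however, has two defects.

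First, the existence of $r$ does not follow from nilpotence of $N$ on each finite-dimensional $M_\lambda$. $M$ has infinitely many conformal weight spaces, and the local nilpotency indices need not be bounded a priori. The paper gets global nilpotence from finite length via Fitting's lemma. You could argue instead that the ascending chain $\Ker N\subseteq\Ker N^2\subseteq\cdots$ of submodules stabilizes by Proposition~\ref{prop:finlen} and has union $M$. Alternatively, bypass $r$ by replacing $M$ with the submodule $\Ker N^2$: if $N^jv\neq0=N^{j+1}v$ with $j\ge1$, then $N^{j-1}v\in\Ker N^2$ and $N(N^{j-1}v)\neq0$.

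Second, your witness for $N\neq0$ on $M/\Ker N^{r-2}$ is wrong for $r\ge3$. Since $N^{r-2}(N^{r-1}v)=N^{2r-3}v=0$, the class of $N^{r-1}v$ vanishes. The correct witness is the class of $v$ itself, since $N^{r-2}(Nv)=N^{r-1}v\neq0$.

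The construction of $E_M$ is never actually given: your third paragraph breaks off in placeholders, and the bullet defining $Y'$ is not a definition. The version you finally use, $E_M=X'/Y'$ with $X'$ minimal subject to $N(X')=N(X)$, tacitly needs $N(X')\subseteq X'$. This fails for a general square-zero endomorphism: take $N(a,b)=(0,a)$ on $L\oplus L$ and $X'=L\oplus 0$. Here it does hold, because a submodule of an ordinary module is the sum of its intersections with the $M_\lambda$, and on each $M_\lambda$ the operator $z_n(0)$ is a polynomial in $z(0)$. But this must be said.

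Minimality is irrelevant anyway, since $X'/(X'\cap\Ker N)\cong N(X')$ is just the first isomorphism theorem. The setup you abandoned already works. Let $X$ be the full preimage of $B$, so that $N(X)\subseteq\Ker N\subseteq X$, and let $Y\subseteq\Ker N$ be maximal with $Y\cap N(X)=0$; put $E_M=X/Y$. The module $\Ker N$ lies in $\Oinf_k(\fgt)$ by Lemma~\ref{lem:central}, $z(0)=z_s(0)$ on it, $K$ acting by $k$, $d=-L(0)$, and grading restriction. So Theorem~\ref{thm:gk} makes it completely reducible, and hence $\Ker N=N(X)\oplus Y$. Therefore $E_M$ is an extension of $B\cong L(\widehat\lambda_p)$ by $N(X)\cong L(\widehat\lambda_p)$. $N$ descends to $E_M$ and is nonzero there because $N(X)\cap Y=0$.

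This is exactly the paper's $M'_J/N'$. The only genuine difference is that you obtain the simple piece $B$ from finite length rather than from semisimplicity of $M'/N$, which saves one application of Theorem~\ref{thm:gk}.
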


\begin{proof}
From Lemma \ref{lem:Nend}, $z_n(0)\in\End_{L_{\widehat{\mathfrak{g}}}(k,0)}(M)$.
Since $z_n(0)$ preserves each finite-dimensional conformal weight space $M_{\lambda}$, it is
locally nilpotent.
From Proposition \ref{prop:finlen}, $M$ is of finite length.
Suppose the length of $M$ is $l$,
by the Fitting lemma \cite{AK74}, we have $M=\Ker\big(z_n(0)\big)^l\oplus\im\big(z_n(0)\big)^l$.
Since a locally nilpotent operator has no non-zero invertible part, we have $\im\big(z_n(0)\big)^l=0$, thus $z_n(0)$
is nilpotent on $M$. Let $r\ge2$ be an integer such that $z_n(0)^r=0$ and $z_n(0)^{r-1}\neq0$, and set
$M'=z_n(0)^{r-2}M$. As $z_n(0)\in\End_{L_{\widehat{\mathfrak{g}}}(k,0)}(M)$, $M'$ is a $L_{\widehat{\mathfrak{g}}}(k,0)$-submodule, and we have
$z_n(0)^2=0$ on $M'$ and $z_n(0)|_{M'}\neq0$.
Set $N=\Ker(z_n(0)|_{M'})$ and $W=M'/N$. Since $z_n(0)^2=0$ on $M'$, we have $z_n(0)M'\subseteq N$,
then for $v\in M'$, $z_n(0)v=0$ in $W$, so
$z_n(0)|_W=z_n(0)|_N=0$. Then
$z(0)$ acts semisimply on $W$, i.e., $z(0)=z_s(0)$ on $W$,
and $z(0)$ acts semisimply on $N$, i.e.,
$z(0)=z_s(0)$ on $N$, by
Lemma~\ref{lem:central}, $\fh^{\mathrm{ss}}(0)$ acts semisimply on $W$ and
$N$. Note that the center $K$ acts as the scalar $k$ and the derivation $d=-L(0)$ acts semisimply on $W$ and $N$.
Hence $\fht(0)$ acts semisimply on $W$ and $N$. Moreover, $W$
and $N$ are subquotients of the ordinary module $M$, hence the positive part acts locally finitely on $W$ and $N$. Thus $W,N\in\Oinf_k(\fgt)$, and by
Theorem~\ref{thm:gk} both $W$ and $N$ are semisimple $L_{\widehat{\mathfrak{g}}}(k,0)$-modules.
Define $\phi:W\to z_n(0)M'$ by $\phi(v+N)=z_n(0)v$.
Note that $N=\Ker z_n(0)|_{M'}$,
this is well defined.
It is clear that $\phi$ is an isomorphism and then $W\cong z_n(0)M'\subseteq N$.
Since $z_n(0)|_{M'}\neq0$, we have $W\neq0$. As $W$ is a semisimple $L_{\widehat{\mathfrak{g}}}(k,0)$-module, we choose a
simple direct summand $J\subseteq W$, by Theorem~\ref{thm:LW}, $J\cong L(\widehat\lambda_p)$ for some
$p$. Let $T=\phi(J)\subseteq N$, then $T$ is simple and $T\cong L(\widehat\lambda_p)$. Since $N$ is
semisimple, we choose a $L_{\widehat{\mathfrak{g}}}(k,0)$-submodule $N'\subseteq N$ such that $N=T\oplus N'$. Let
$\pi:M'\to W=M'/N$ be the canonical map and set $M'_J=\pi^{-1}(J)$, then
$0\to N\to M'_J\to J\to0$ is exact. Note that $N/N'\cong T$, we obtain a
short exact sequence
$0\longrightarrow T\longrightarrow M'_J/N'\longrightarrow J\longrightarrow0$,
where $T\cong J\cong L(\widehat\lambda_p)$ and
$M'_J/N'$ is an ordinary module, we denote $E_M=M'_J/N'$.
Since
$z_n(0)(N)=0\subseteq N'$, $z_n(0)$ preserves $M'_J$ and descends to $E_M$, denote
the descended operator by ${z_n^{E_M}(0)}$. For $t=\phi(s)\in T$ with $s\in J$, we choose
$v\in M'_J$ such that $\pi(v)=s$, then ${z_n^{E_M}(0)}(v+N')=z_n(0)v=t$, so
${z_n^{E_M}(0)}(E_M)\supseteq T\neq0$ and then ${z_n^{E_M}(0)}\neq0$. Since
$z_n(0)^2=0$ on $M'$, we have ${z_n^{E_M}(0)}^2=0$ on $E_M$. As $z(0)|_{E_M}=z_s(0)|_{E_M}+{z_n^{E_M}(0)}$ with $z_s(0)|_{E_M}$ semisimple,
${z_n^{E_M}(0)}$ is the nilpotent part of $z(0)$ on $E_M$, and it is non-zero.
\end{proof}


\begin{prop}\label{thm:typeI}
Let $\fg$ be a basic classical Lie superalgebra of type I and let $k$ be a
boundary admissible level of $\fgt$. Then $\fh(0)$
acts semisimply on ordinary $L_{\fgt}(k,0)$-modules.
\end{prop}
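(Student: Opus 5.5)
By Lemma~\ref{lem:central} and the decomposition $\fh=\fh^{\mathrm{ss}}\oplus\C z$, it suffices to show that $z(0)$ acts semisimply on every ordinary $L_{\fgt}(k,0)$-module $M$. Equivalently, we need $z_n(0)=0$ in the Jordan--Chevalley decomposition $z(0)=z_s(0)+z_n(0)$. We argue by contradiction: suppose $z_n(0)\neq 0$ on $M$. Lemma~\ref{prop:hiorder} then produces some $p\in\{0,\dots,u-1\}$ and a self-extension
\[
0\longrightarrow L(\widehat\lambda_p)\longrightarrow E_M\longrightarrow L(\widehat\lambda_p)\longrightarrow 0
\]
in $\mathcal{O}_{k}^{ord}(\fg)$ on which the induced $z_n(0)$ is non-zero. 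We regard $E_M$ as a $\fgt$-module, with $K$ acting by $k$ and $d$ acting by $-L(0)$. This gives an element of $\Ext^1_{\fgt}\big(L(\widehat\lambda_p),L(\widehat\lambda_p)\big)$, and we set $\nu=\Upsilon_{\widehat\lambda_p}(E_M)$.

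First we compute $\nu$. Choose $v,v'$ as in the definition of $\Upsilon$: $v$ is the image of the highest weight vector, and $v'\in (E_M)_{\widehat\lambda_p}$ lifts it. The generalized weight space is two-dimensional. Here $K$ and $d$ act by scalars, since $E_M$ is ordinary, and $\fh^{\mathrm{ss}}(0)$ acts semisimply by Lemma~\ref{lem:central}, so these all contribute nothing to $\nu$. The only possibly non-semisimple piece is $z(0)$. Hence $\nu(h)=0$ for $h\in\fh^{\mathrm{ss}}\oplus\C K\oplus\C d$, and $z(0)v'=\widehat\lambda_p(z)v'+\nu(z)v$.

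Next we show $\nu(z)\neq 0$. Since $z_n(0)$ is a module endomorphism (Lemma~\ref{lem:Nend}), it is non-zero on $E_M$, and it vanishes on the submodule and quotient (both irreducible with $z(0)$ semisimple), it defines a non-zero map $E_M\to L(\widehat\lambda_p)\to E_M$, that is, an isomorphism from the quotient onto the sub. This map sends the image of $v'$, which generates the quotient, to a non-zero multiple of $v$. Thus $z_n(0)v'=c\,v$ with $c\neq0$, so $\nu(z)=c\neq 0$.

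We then pin down $\nu$ as a functional. It vanishes on $\fh^{\mathrm{ss}}$, $K$ and $d$, so under the form it is proportional to $\zeta$ modulo $\C\delta$. The vector $\zeta$ is orthogonal to $\fh^{\mathrm{ss}}$ and satisfies $(\zeta,\delta)=0$, and its $\Lambda_0$-component vanishes because $\nu(d)=0$. Care is needed here: we should write $\nu=a\zeta+b\delta$ with $a\neq 0$. The coefficient $b$ is irrelevant, since $(\delta,\gamma_p)=0$. Proposition~\ref{prop:gammap}(iii) then gives $(\nu,\gamma_p)=a\neq0$.

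On the other hand, Proposition~\ref{prop:gammap}(i),(ii) says $\gamma_p$ is isotropic, indecomposable in $\Delta^+_{\widehat\lambda_p+\rh}$, and satisfies $(\widehat\lambda_p+\rh,\gamma_p)=0$. The weight $\widehat\lambda_p$ is non-critical because $k+h^\vee=h^\vee/u\neq0$. Lemma~\ref{lem:tangent} therefore gives $(\nu,\gamma_p)=0$, which is a contradiction. Hence $z_n(0)=0$ and $\fh(0)$ acts semisimply.

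The main obstacle is the step identifying $\nu$ with a non-zero multiple of $\zeta$ modulo $\delta$. That step requires checking that $\nu(z)\neq0$, i.e., that $z_n(0)$ genuinely moves $v'$ to $v$ in the top weight space. It also requires that the normalization of the form matches $\zeta(h)=(z,h)$, so that $(\nu,\gamma_p)=\nu(z)\,(\zeta,\gamma_p)$ up to a non-zero scalar.
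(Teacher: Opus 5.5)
Your proposal is correct and follows the paper's proof essentially step for step: you use Lemma~\ref{prop:hiorder} to get the self-extension $E_M$, identify $\Upsilon_{\widehat\lambda_p}(E_M)$ as a non-zero multiple of $\zeta$, and contradict Lemma~\ref{lem:tangent} via Proposition~\ref{prop:gammap}. Your endomorphism argument for $\nu(z)\neq0$ usefully justifies a step the paper only asserts. Your hedge term $b\delta$ is in fact zero: $\nu(K)=0$ kills the $\Lambda_0$-component and $\nu(d)=0$ kills the $\delta$-component (you have these two swapped), so $\nu=a\zeta$ exactly as in the paper.
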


\begin{proof}
Let $M$ be an ordinary $L_{\widehat{\mathfrak{g}}}(k,0)$-module. By Lemma~\ref{lem:central} it suffices to show
that $z(0)$ is semisimple on $M$. Suppose not, the nilpotent part $z_n(0)$ of
$z(0)=z_s(0)+z_n(0)$ is non-zero. By Proposition~\ref{prop:finlen}, $M$ is of finite length,
and by Lemma~\ref{lem:Nend}, $z_n(0)\in\End_{L_{\widehat{\mathfrak{g}}}(k,0)}(M)$. From Lemma~\ref{prop:hiorder}, there
exists a simple ordinary module $L(\widehat\lambda_p)$ and a self-extension
$0\longrightarrow L(\widehat\lambda_p)\longrightarrow E_M\longrightarrow L(\widehat\lambda_p)\longrightarrow0$ in the category $\mathcal{O}_{k}^{ord}(\fg)$
on which $z_n(0)$ on $E_M$ is non-zero.
Let $v\in L(\widehat\lambda_p)$ be a highest weight vector and let $\widetilde v\in E_M$ be a lift of the
highest weight vector of the quotient $E_M/L(\widehat\lambda_p)$ such that
$h(0)\,\widetilde v=\widehat\lambda_p(h)\,\widetilde v+\nu(h)\,v$ for any $h\in\fht$,
i.e., $\Upsilon_{\widehat{\lambda}_p}(E_M)=\nu$.
Since $\fh^{\mathrm{ss}}(0)$ acts semisimply on $E_M$ by
Lemma~\ref{lem:central}, we have
$\nu|_{\fh^{\mathrm{ss}}}=0$. Similarly we also have
$\nu(K)=0$ and $\nu(d)=0$. Note that the annihilator of
$\fh^{\mathrm{ss}}\oplus\C K\oplus\C d$ in $\fht^*$ is one-dimensional, spanned by
$\zeta$ (Recall that $\zeta$ is dual to the center $z$). Hence
$\nu=c\,\zeta$ for some $c\in\C$.
Since the nilpotent part of $z(0)$ on $E_M$ being non-zero is equivalent
to $z(0)$ having a non-trivial Jordan block on the two-dimensional space
$\C v\oplus\C\widetilde v$, i.e., $\nu(z)\neq0$, and
$\zeta(z)\neq0$, we obtain $c\neq0$.
On the other hand,
by Proposition~\ref{prop:gammap}, $\gamma_p=\alpha_{odd}+p\delta$ is isotropic and
indecomposable in $\Delta_{\widehat\lambda_p+\rh}^{+}$ such that
$(\widehat\lambda_p+\rh,\gamma_p)=0$, applying Lemma~\ref{lem:tangent} to the
self-extension $E_M$, we get
$(\nu,\gamma_p)=0$. By Proposition~\ref{prop:gammap},
$(\nu,\gamma_p)=c\,(\zeta,\gamma_p)=c,$
so $c=0$, a contradiction. Therefore $z_n(0)=0$ and $z(0)=z_s(0)$ is semisimple on $M$. By
Lemma~\ref{lem:central}, $\fh(0)=\fh^{\mathrm{ss}}(0)\oplus\C z(0)$ acts semisimply on $M$.
\end{proof}

Next we consider the case of finite-dimensional simple Lie algebras and the basic classical Lie superalgebras of type II.

\begin{prop}\label{prop:typeII}
Let $\fg$ be a finite-dimensional simple Lie algebra or a basic classical Lie
superalgebra of type II except $D(2|1;a)$ and let $k$ be a boundary admissible level of $\fgt$. Then $\fh(0)$ acts semisimply on ordinary $L_{\fgt}(k,0)$-modules.
\end{prop}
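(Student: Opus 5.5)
The plan is to use that in these cases the even part $\fg_{\bar{0}}$ is semisimple, so that $\fh\subseteq\fg_{\bar 0}=\fg_{\bar 0}^{\mathrm{ss}}$ and $\fh=\fh^{\mathrm{ss}}$. The argument then reduces directly to Lemma~\ref{lem:central}, with no Jordan block analysis needed.

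\emph{First step: check that $\fg_{\bar0}$ is semisimple in each case.} If $\fg$ is a finite-dimensional simple Lie algebra, then $\fg_{\bar0}=\fg$ is semisimple. For the type II basic classical Lie superalgebras other than $D(2|1;a)$ (namely $B(m|n)=osp(2m+1|2n)$, $D(m|n)=osp(2m|2n)$ with $m\ge2$, $F(4)$, $G(3)$), the even parts are
\[
so(2m+1)\oplus sp(2n),\quad so(2m)\oplus sp(2n),\quad so(7)\oplus sl(2),\quad G_2\oplus sl(2)
\]
respectively. Each of these is semisimple, since $so(4)\cong sl(2)\oplus sl(2)$ is also semisimple. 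In particular $\fg_{\bar0}$ has trivial center, so $\fg_{\bar0}=[\fg_{\bar0},\fg_{\bar0}]=\fg_{\bar0}^{\mathrm{ss}}$. Since a Cartan subalgebra $\fh$ of $\fg$ lies in $\fg_{\bar0}$, this gives $\fh^{\mathrm{ss}}=\fh\cap\fg_{\bar0}^{\mathrm{ss}}=\fh$.

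\emph{Second step: apply Lemma~\ref{lem:central}.} Let $M$ be an ordinary $L_{\fgt}(k,0)$-module. Each conformal weight space $M_\lambda$ is finite-dimensional and stable under $\fg(0)$, because $[L(0),x(0)]=0$. By Weyl's complete reducibility theorem applied to the semisimple Lie algebra $\fg_{\bar0}(0)$, each $M_\lambda$ decomposes into finite-dimensional irreducible $\fg_{\bar0}$-modules. On such modules the Cartan subalgebra acts diagonalizably. Since $M=\bigoplus_\lambda M_\lambda$, the operators in $\fh(0)=\fh^{\mathrm{ss}}(0)$ act semisimply on $M$. This is exactly the content of Lemma~\ref{lem:central} with $\fh^{\mathrm{ss}}=\fh$.

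\emph{Where the difficulty lies.} The only step requiring care is the case check in the first step, in particular confirming that no type II algebra in the list has a one-dimensional center in $\fg_{\bar 0}$, which is precisely what separates type II from type I. Note also that boundary admissibility of $k$ plays no role here: the statement holds for any level. The boundary hypothesis is kept only for uniformity with Proposition~\ref{thm:typeI}.
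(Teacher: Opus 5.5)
Your proposal is correct and follows essentially the same argument as the paper. Because $\fg_{\bar 0}$ is semisimple in all these cases, Weyl's complete reducibility on each finite-dimensional, $\fg(0)$-stable conformal weight space $M_\lambda$ shows that $\fh(0)$ acts semisimply on $M=\bigoplus_\lambda M_\lambda$; your explicit check of the even parts and your remark that boundary admissibility is not needed are both accurate.
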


\begin{proof}
Let $\fg$ be a finite-dimensional simple Lie algebra or a basic classical Lie
superalgebra of type II, then the even part $\fg_{\bar{0}}$ is semisimple. For each
ordinary $L_{\fgt}(k,0)$-module $M$, the conformal weight space $M_{\lambda}$ is a
finite-dimensional $\fg$-module, hence $M_{\lambda}$ is a completely reducible
$\fg_{\bar{0}}$-module by Weyl's completely reducibility theorem, thus $\fh(0)$ acts
semisimply on each $M_{\lambda}$. Hence $\fh(0)$ acts semisimply on ordinary $L_{\fgt}(k,0)$-module $M$.
\end{proof}

Recall from \cite{EGNO15} that a $\mathbb{C}$-linear abelian category $\mathcal{C}$ is finite if the following conditions hold:
(i) $\mathcal{C}$ has finite dimensional spaces of morphisms; (ii) every object of $\mathcal{C}$ has finite length;
(iii) $\mathcal{C}$ has enough projectives, i.e., every simple object of $\mathcal{C}$ has a projective cover;
(iv) there are finitely many isomorphism classes of simple objects.
Combining Proposition \ref{thm:typeI} and Proposition \ref{prop:typeII}, we obtain the following theorem.

\begin{thm}\label{cor:ordsemisimple}
Let $\fg$ be a basic classical Lie superalgebra except $D(2|1;a)$ and
let $L_{\widehat{\mathfrak{g}}}(k,0)$ be the simple affine vertex operator superalgebra with
boundary admissible level $k$.
Then the category $\mathcal{O}_{k}^{ord}(\fg)$ of ordinary $L_{\widehat{\mathfrak{g}}}(k,0)$-modules is finite and semisimple.
\end{thm}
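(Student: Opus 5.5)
The plan is to first show that every ordinary $L_{\fgt}(k,0)$-module $M$ lies in $\Oinf_k(\fgt)$, and then to combine Theorem~\ref{thm:gk} with the classification Theorem~\ref{thm:LW} and the finite-length result Proposition~\ref{prop:finlen}.

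\emph{Step 1: $M\in\Oinf_k(\fgt)$.} By Proposition~\ref{thm:typeI} in type I, and by Proposition~\ref{prop:typeII} otherwise, $\fh(0)$ acts semisimply on $M$. The central element $K$ acts by the scalar $k$. Since $M$ is ordinary, $L(0)$ is semisimple, so $d=-L(0)$ acts semisimply. Hence $\fht$ acts semisimply. The positive part $\fg\otimes t\C[t]\oplus\mathfrak{n}_+(0)$ acts locally finitely for two reasons:
\begin{itemize}
\item each $x(n)$ with $n>0$ lowers the conformal weight, and conformal weights of $M$ are bounded below on each coset $\lambda+\Z$;
\item $\mathfrak{n}_+(0)$ preserves the finite-dimensional spaces $M_\lambda$.
\end{itemize}
So for any $v$ the space $U(\fgt_+)v$ lies in finitely many finite-dimensional conformal weight spaces. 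As a weak $L_{\fgt}(k,0)$-module in $\Oinf_k(\fgt)$, $M$ is therefore completely reducible by Theorem~\ref{thm:gk}. Every simple summand is an irreducible ordinary module, hence isomorphic to some $L(\widehat\lambda_p)$ by Theorem~\ref{thm:LW}. In the $osp(1|2n)$ subprincipal case I would use the remark after Proposition~\ref{prop:finlen} instead. This gives semisimplicity.

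\emph{Step 2: finiteness in the sense of \cite{EGNO15}.} I check the four conditions in turn.
\begin{itemize}
\item[(ii)] Finite length is Proposition~\ref{prop:finlen}, or the subsequent remark for $osp(1|2n)$.
\item[(iv)] There are finitely many simple objects by Theorem~\ref{thm:LW}: $u$ of them in type I, one otherwise, and finitely many in the subprincipal case by \cite{CGL24}.
\item[(iii)] In a semisimple category every object is projective, so each simple object is its own projective cover.
\item[(i)] Since $M$ and $N$ are finite direct sums of simples, $\Hom(M,N)$ is a finite direct sum of spaces $\Hom(L(\widehat\lambda_p),L(\widehat\lambda_q))$. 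Each such space is determined by the image of the lowest weight space, which is finite-dimensional, so the spaces are finite-dimensional. By Schur's lemma they are in fact at most one-dimensional even parts, plus possibly odd isomorphisms.
\end{itemize}

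The main obstacle is the semisimplicity of $\fh(0)$ in Step 1, which is already handled by Propositions~\ref{thm:typeI} and~\ref{prop:typeII}. What remains is to check carefully that "locally finite" for the positive part and "weak module in $\Oinf_k$" match the hypotheses of Theorem~\ref{thm:gk} exactly as stated in \cite{GK25}.
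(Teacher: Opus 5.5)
Your proposal is correct and follows essentially the same route as the paper: semisimplicity of $\fh(0)$ from Propositions~\ref{thm:typeI} and~\ref{prop:typeII} gives $M\in\Oinf_k(\fgt)$ and hence complete reducibility by Theorem~\ref{thm:gk}, and finiteness then follows from Theorem~\ref{thm:LW} and Proposition~\ref{prop:finlen}. You spell out the local-finiteness argument, which the paper only asserts, and you treat the subprincipal $osp(1|2n)$ case separately via the remark, which the paper does not do explicitly; otherwise the argument is the same.
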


\begin{proof}
Let $M$ be an ordinary $L_{\widehat{\mathfrak{g}}}(k,0)$-module.
By Proposition \ref{thm:typeI} and Proposition \ref{prop:typeII}, $\fh(0)$ acts semisimply on $M$.
Since $K$ acts as the scalar $k$ and $d=-L(0)$ acts semisimply, $\fht(0)$ acts semisimply on $M$. As in the
proof of Lemma~\ref{prop:hiorder},  the positive
part of $\fgt$ acts locally finitely on $M$, hence $M\in\Oinf_k(\fgt)$, and then $M$ is
completely reducible by Theorem~\ref{thm:gk}. Thus $\mathcal{O}_{k}^{ord}(\fg)$ is semisimple and has enough projectives.
By Theorem~\ref{thm:LW}, there are finitely many isomorphism classes of simple objects in $\mathcal{O}_{k}^{ord}(\fg)$.
By Proposition~\ref{prop:finlen}, $M$ has
finite length, so it is a finite direct sum of irreducible modules.
Then $\mathcal{O}_{k}^{ord}(\fg)$ has finite dimensional spaces of morphisms, thus $\mathcal{O}_{k}^{ord}(\fg)$ is finite.
\end{proof}

\section{Rigid braided tensor supercategory $\mathcal{O}_{k}^{ord}(\fg)$}

In this section, we prove that the category $\mathcal{O}_{k}^{ord}(\fg)$ of ordinary $L_{\widehat{\mathfrak{g}}}(k,0)$-modules is a
rigid braided tensor supercategory with ribbon twist $\theta=e^{2\pi iL(0)}$, and thus it is a ribbon fusion supercategory.

\subsection{Braided tensor supercategory}

Let $\KL_k(\fg)$ denote the category of finite-length generalized
$L_{\widehat{\mathfrak{g}}}(k,0)$-modules of level $k$. A generalized
module belongs to $\KL_k(\fg)$ if and only if it admits a finite composition
series whose composition factors are irreducible ordinary modules \cite{CY}. In particular,
the irreducible objects of $\KL_k(\fg)$ are exactly the irreducible ordinary
modules.

\begin{thm}[\cite{CY}]\label{thm:CY}
Assume that the category of ordinary $L_{\fgt}(k,0)$-modules is semisimple. Then
$\KL_k(\fg)$ has a natural structure of braided tensor category.
\end{thm}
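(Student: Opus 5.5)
The statement is Theorem~\ref{thm:CY}, which is cited from \cite{CY}. The plan is therefore not to prove it from scratch but to check that the hypotheses of the Creutzig--Yang machinery are met, and that their argument passes to the superalgebra setting (as \cite{CMY22} indicates).

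The first step is to recall how \cite{CY} reaches the conclusion. The Huang--Lepowsky--Zhang logarithmic tensor category theory produces a braided tensor category structure on a category $\mathcal{C}$ of grading-restricted generalized $V$-modules once three things hold:
\begin{itemize}
\item[(a)] $\mathcal{C}$ is closed under the $P(z)$-tensor products, i.e.\ closed under contragredients and finite direct sums;
\item[(b)] objects of $\mathcal{C}$ are $C_1$-cofinite, or more generally satisfy the relevant convergence and extension conditions;
\item[(c)] $\mathcal{C}$ is closed under the $P(z)$-tensor product.
\end{itemize}
For $\KL_k(\fg)$, finite-length generalized modules with ordinary composition factors are finitely strongly generated over the affine algebra, hence $C_1$-cofinite in the sense of the affine Lie algebra. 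The convergence and extension property then follows from the $C_1$-cofiniteness results of Huang, which gives (b).

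The key step, and the one where semisimplicity of the ordinary category is used, is (c): the tensor product of two objects of $\KL_k(\fg)$ must again have finite length with ordinary composition factors. Here I would follow \cite{CY}. First show that the tensor product $W_1\boxtimes W_2$, constructed as a contragredient of a subspace of $(W_1\otimes W_2)^*$, is a generalized module that is a quotient of a generalized Verma-type (Weyl-type) module induced from the finite-dimensional top spaces. Semisimplicity of the ordinary category implies that every object of $\KL_k(\fg)$ is a quotient of a finite direct sum of generalized Weyl modules $V^k(M)\otimes_{U(\fg)}$-type induced modules. It also implies that the maximal $L_{\fgt}(k,0)$-module quotients of such induced modules have finite length. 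This is because lower-bounded $L_{\fgt}(k,0)$-modules generated by finite-dimensional top spaces are then controlled by finitely many irreducibles via the grading and the finiteness of $\mathcal{O}_k^{ord}(\fg)$ (Theorem~\ref{cor:ordsemisimple}).

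I expect the main obstacle to be this finite-length argument for the tensor product. The difficulty is ruling out infinitely long $L(0)$-Jordan chains and unbounded composition series in $W_1\boxtimes W_2$. I would attempt it by bounding the conformal weights appearing in $W_1\boxtimes W_2$ below, using the finite set of lowest conformal weights $H$. I would then use that each generalized conformal weight space is a finite-dimensional $\fg$-module on which $\fh(0)$ acts semisimply (Propositions~\ref{thm:typeI}, \ref{prop:typeII}), so that the argument of Proposition~\ref{prop:finlen} with the counting function $\ell$ adapts to generalized modules.

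For the super version, the remaining step is to note that all intertwining operators and structure maps can be taken even, and that parity is carried along as in \cite{CKM,CMY22}. This gives the braided tensor (super)category structure on $\KL_k(\fg)$.
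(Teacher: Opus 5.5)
Your top-level plan agrees with the paper. The paper does not prove Theorem~\ref{thm:CY}; it quotes Corollary 3.13 of \cite{CY} and justifies the super version by \cite[\S1.5]{CY} and \cite[\S2.3]{CMY22}. The problem is your reconstruction of the one step where the hypothesis has to do the work, namely that $W_1\boxtimes W_2$ has finite length.

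\begin{itemize}
\item The theorem assumes only that ordinary $L_{\fgt}(k,0)$-modules form a semisimple category, possibly with infinitely many simple objects. Your argument instead relies on the finite set $H$ and the finiteness in Theorem~\ref{cor:ordsemisimple}, which the statement does not contain. Your stated reason that quotients of the induced modules have finite length goes through that finiteness, not through semisimplicity. And that objects of $\KL_k(\fg)$ are quotients of induced modules is just finite generation, not a consequence of semisimplicity.
\item Propositions~\ref{thm:typeI} and~\ref{prop:typeII} are statements about ordinary modules, whereas $W_1\boxtimes W_2$ is a priori only a generalized module. In type I, semisimplicity of $z(0)$ on such a module is exactly the delicate point; compare the $\zeta$-component of $\nu$ in Lemma~\ref{lem:extKL}. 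In any case, Cartan semisimplicity is irrelevant to counting composition factors.
\item Your item (a) merges two different requirements. Closure under contragredients is separate from closure under $P(z)$-tensor products. It holds here because contragredients of irreducible ordinary modules are irreducible ordinary.
\end{itemize}

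The missing idea is how semisimplicity actually yields finite length. Grant, from the $C_1$-cofiniteness theory you invoke in (b), that $W=W_1\boxtimes W_2$ is lower bounded and $C_1$-cofinite.
\begin{itemize}
\item $W$ is spanned by monomials $x_1(-n_1)\cdots x_r(-n_r)u$ with $n_i\geq 1$ and $u$ in a finite-dimensional subspace. Hence $W$ is grading-restricted, and it is a quotient of $U(\fgt)\otimes_{U(\fg[t]\oplus\C K)}U'$ for a finite-dimensional $\fg[t]$-submodule $U'\subseteq W$.
\item Quotients of such generalized Weyl modules need not be ordinary, so semisimplicity cannot be applied to them directly.
\item However, $\fg\otimes t\C[t]$ acts nilpotently on $U'$, so a composition series of $U'$ has irreducible $\fg$-modules $S_i$ as factors. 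Exactness of induction then gives $W$ a finite filtration whose subquotients are $L_{\fgt}(k,0)$-quotients of the Weyl modules $U(\fgt)\otimes_{U(\fg[t]\oplus\C K)}S_i$.
\item On these subquotients $L(0)$ acts semisimply, since the Casimir acts on $S_i$ by a scalar. So they are ordinary, hence semisimple by hypothesis.
\item Each is generated by the finite-dimensional image of $S_i$, so it is a finite direct sum of irreducibles.
\end{itemize}
This gives $W\in\KL_k(\fg)$ with no finiteness of simple objects and no Cartan semisimplicity.

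In the paper's special situation your counting function $\ell$ can also be made to work: compute it with generalized eigenspaces and first establish grading-restrictedness as above. But that proves only a special case of Theorem~\ref{thm:CY}.
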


Theorem~\ref{thm:CY} was proved in \cite{CY} for affine vertex operator algebras. As
mentioned in \cite[{\S}1.5]{CY} and \cite[{\S}2.3]{CMY22}, the results naturally generalize to affine vertex operator superalgebras.
We next show that self-extensions of irreducible ordinary modules vanish in
$\KL_k(\fg)$.

\begin{lem}\label{lem:extKL}
Let $\fg$ be a basic classical Lie superalgebra except $D(2|1;a)$ and
let $k$ be a boundary admissible level of $\fgt$.
Then $\Ext^1_{\KL_k(\fg)}(L(\widehat\lambda_p),L(\widehat\lambda_p))=0$
for any irreducible ordinary $L_{\widehat{\mathfrak{g}}}(k,0)$-module $L(\widehat\lambda_p)$.
\end{lem}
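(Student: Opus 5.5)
Let $0\to L(\widehat\lambda_p)\to E\to L(\widehat\lambda_p)\to 0$ be an exact sequence in $\KL_k(\fg)$. The plan is to show that $E$ is in fact ordinary. Then Theorem~\ref{cor:ordsemisimple} applies: $\mathcal{O}_{k}^{ord}(\fg)$ is semisimple, so the sequence splits and the extension class vanishes.

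Since $E$ has finite length with ordinary composition factors, each generalized conformal weight space $E_\mu$ is finite-dimensional, with $\dim E_\mu=2\dim L(\widehat\lambda_p)_\mu$. The conformal weights are bounded below, so $E$ is grading-restricted. It remains to show that $L(0)$ acts semisimply on $E$. Set $L(0)_n:=L(0)-L(0)_s$, the nilpotent part on each $E_\mu$. Exactly as in Lemma~\ref{lem:Nend}, since $[L(0),x(n)]=-n\,x(n)$, the operator $L(0)_n$ commutes with all $x(n)$. Hence $L(0)_n$ is an $L_{\fgt}(k,0)$-module endomorphism of $E$ (of even parity). Moreover $L(0)_n$ kills the submodule $L(\widehat\lambda_p)$ and maps $E$ into it, since $L(0)$ is semisimple on both the sub and the quotient. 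So $L(0)_n$ factors as
\[E\to L(\widehat\lambda_p)\xrightarrow{\ f\ } L(\widehat\lambda_p)\hookrightarrow E.\]
By Schur's lemma, $f$ is a scalar $c$ (the endomorphism algebra of an irreducible module with finite-dimensional graded pieces is $\C$). It therefore suffices to show $c=0$. It is enough to test this on the lowest conformal weight space $E_{h_p}$.

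To get $c=0$, I would use the Sugawara construction. We have $k+h^\vee=h^\vee/u\neq 0$, and on any weak $L_{\fgt}(k,0)$-module $L(0)$ equals the Sugawara operator
\[\frac{1}{2(k+h^\vee)}\Big(\sum_i x_i(0)x^i(0)+2\sum_{n>0}\sum_i x_i(-n)x^i(n)\Big).\]
On the lowest weight space $E_{h_p}$, all positive modes $x(n)$ with $n>0$ act by zero, because weights are bounded below by $h_p$. Hence $L(0)|_{E_{h_p}}=\frac{1}{2(k+h^\vee)}\,\Omega$, where $\Omega$ is the Casimir of $\fg$ acting on the finite-dimensional $\fg$-module $E_{h_p}$. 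This space is a self-extension of the finite-dimensional $\fg$-module $L(\widehat\lambda_p)_{h_p}$. The Casimir can still act non-semisimply on such an extension, so this alone does not finish the argument.

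The main obstacle is to exclude a Jordan block of $\Omega$ on $E_{h_p}$. First I would show that $\fh(0)$ acts semisimply on $E$; this follows from Proposition~\ref{thm:typeI} and Proposition~\ref{prop:typeII}. Their proofs only used finite length, the $\fg$-module structure on finite-dimensional graded pieces, and Theorem~\ref{thm:gk}. The key issue is that $L(0)$-semisimplicity entered only through $d=-L(0)$, so I would rerun the argument with $d$ replaced by $-L(0)_s$. Once $\fh(0)$ is semisimple, take a highest weight vector. The generalized $\widehat\lambda_p$-weight space of $E_{h_p}$ is two-dimensional, and its vectors are killed by $\fg$'s positive root vectors, because $\widehat\lambda_p+\alpha$ is not a weight of $E$. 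On such vectors $\Omega$ acts by $(\lambda,\lambda+2\rho)$, a scalar, so $L(0)_n$ vanishes on the highest weight vector $\widetilde v$ lifting that of the quotient. Then $L(0)_n\widetilde v=c\,v=0$ forces $c=0$. Therefore $L(0)_n=0$, $E$ is ordinary, and $E$ splits by Theorem~\ref{cor:ordsemisimple}.
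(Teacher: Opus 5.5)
Your argument is correct, but it finishes the proof differently from the paper.

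\textbf{The paper's route.} The paper never shows separately that $L(0)$ is semisimple on $E$. It lets $d=-L(0)+h_p$ act, possibly with a Jordan block, so $\nu=\Upsilon_{\widehat\lambda_p}(E)$ may a priori have a $\delta$-component. It then shows $\nu\in\C\delta$. Weyl's theorem gives $\nu|_{\fh^{\mathrm{ss}}}=0$, and Lemma~\ref{lem:tangent} together with $(\delta,\gamma_p)=0$ and $(\zeta,\gamma_p)=1$ removes the $\zeta$-component. Since $\Delta^\perp=\C\delta$, the splitting criterion Lemma~\ref{lem:split} from \cite{GK11} then gives the splitting at once.

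\textbf{Your route.} You use the same $\Upsilon$/Lemma~\ref{lem:tangent} input, via your rerun of Proposition~\ref{thm:typeI}, to get $\fh(0)$-semisimplicity. You then remove the $\delta$-direction by hand. By Schur, the even endomorphism $L(0)_n$ is a scalar $c$ times $E\to L(\widehat\lambda_p)\hookrightarrow E$. The Casimir value $(\lambda,\lambda+2\rho)/2(k+h^\vee)=h_p$ on the top highest-weight vector forces $c=0$. So $E$ is ordinary, and Theorem~\ref{cor:ordsemisimple} splits it.

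\textbf{What each buys.} Your route replaces the splitting criterion of \cite{GK11} by an elementary Sugawara computation. It also shows directly that such self-extensions are ordinary. The cost is that it depends on Theorem~\ref{cor:ordsemisimple}, hence on the complete reducibility result Theorem~\ref{thm:gk}. The paper's proof of this lemma needs only the results of \cite{GK11} and is shorter.

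\textbf{A simplification of your first step.} Replacing $d$ by $-L(0)_s$ is harmless but unnecessary. With $d=-L(0)+h_p$, the possible $\delta$-component of $\nu$ is invisible to $\gamma_p$. Also, because $E$ has length two, $z_n(0)$ is itself a scalar multiple of $E\to L(\widehat\lambda_p)\hookrightarrow E$, and that scalar is $\nu(z)$. So you do not need to rerun Lemma~\ref{prop:hiorder} or Theorem~\ref{thm:gk} for that step.
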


\begin{proof}
Let
$0\longrightarrow L(\widehat\lambda_p)\longrightarrow E\longrightarrow L(\widehat\lambda_p)\longrightarrow0$
be an exact sequence in $\KL_k(\fg)$.
Since $E\in\KL_k(\fg)$, $E$ is grading-restricted. For any $h\in\fh$, $[L(0),h(0)]=0$, so $h(0)$ preserves each
generalized $L(0)$-eigenspace, hence $\fh(0)$ acts locally finitely on $E$.
Let $h_p$ be the lowest conformal weight of $L(\widehat\lambda_p)$. We view $E$ as a
$\fgt$-module by setting $d=-L(0)+h_p$ on $E$, then $E$ is a $\fgt$-module with generalized highest weight $\widehat\lambda_p$.
Thus there exists a unique $\nu\in\fht^*$ with
$\Upsilon_{\widehat\lambda_p}(E)=\nu$.
Since $K$
acts on $E$ by $k$, we have $\nu(K)=0$.
Let $E_{h_p}$ be the
generalized $L(0)$-eigenspace of eigenvalue $h_p$, then $E_{h_p}$ is a finite-dimensional $\fg_{\bar{0}}$-module.

If $\fg$ is a basic classical Lie
superalgebra of type II or a finite-dimensional simple Lie algebra, then $\fg_{\bar{0}}=\fg_{\bar{0}}^{\mathrm{ss}}$ is
semisimple, so by Weyl's completely reducibility theorem, $\fh(0)$ acts semisimply on $E_{h_p}$,
 hence $\nu|_{\fh}=0$, together with $\nu(K)=0$ we have $\nu\in\C\delta$.
If $\fg$ is a basic classical Lie
superalgebra of type I, then $\fg_{\bar{0}}=\fg_{\bar{0}}^{\mathrm{ss}}\oplus\C z$
and $\fh^{\mathrm{ss}}(0)$ acts semisimply on $E_{h_p}$, so $\nu|_{\fh^{\mathrm{ss}}}=0$,
together with $\nu(K)=0$, we have $\nu=a\delta+c\,\zeta$ for some
$a,c\in\C$. By Proposition~\ref{prop:gammap} and
Lemma~\ref{lem:tangent}, we have
$(\nu,\gamma_p)=0$. On the other hand,
 by Proposition~\ref{prop:gammap}(iii), $(\delta,\gamma_p)=0$ and $(\zeta,\gamma_p)=1$, so
$0=(\nu,\gamma_p)=(a\delta+c\zeta,\gamma_p)=c$,
and thus $\nu=a\delta\in\C\delta$.
Hence in both cases $\nu\in\C\delta$.
Note that $\Delta^{\perp}=\C\delta$ for indecomposable affine Lie superalgebra $\fgt$.
By Lemma~\ref{lem:split}, the exact sequence $0\longrightarrow L(\widehat\lambda_p)\longrightarrow E\longrightarrow L(\widehat\lambda_p)\longrightarrow0$
splits over $[\fgt,\fgt]$, thus $0\longrightarrow L(\widehat\lambda_p)\longrightarrow E\longrightarrow L(\widehat\lambda_p)\longrightarrow0$
splits over $L_{\widehat{\mathfrak{g}}}(k,0)$. Hence
$\Ext^1_{\KL_k(\fg)}(L(\widehat\lambda_p),L(\widehat\lambda_p))=0$.
\end{proof}

\begin{thm}\label{thm:KL}
Let $\fg$ be a basic classical Lie superalgebra except $D(2|1;a)$ and
let $L_{\widehat{\mathfrak{g}}}(k,0)$ be the simple affine vertex operator superalgebra with
boundary admissible level $k$.
Then $\mathcal{O}_{k}^{ord}(\fg)=\KL_k(\fg)$ and $\mathcal{O}_{k}^{ord}(\fg)$ is a braided tensor supercategory.
\end{thm}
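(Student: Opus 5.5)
The plan is to show that every object of $\KL_k(\fg)$ is an ordinary module. The braided tensor structure will then follow from Theorem~\ref{cor:ordsemisimple} together with Theorem~\ref{thm:CY}, in its superalgebra version from \cite{CY,CMY22}. The inclusion $\mathcal{O}_{k}^{ord}(\fg)\subseteq\KL_k(\fg)$ is immediate. By Proposition~\ref{prop:finlen} every ordinary module has finite length, and by Theorem~\ref{cor:ordsemisimple} it is a finite direct sum of irreducible ordinary modules. So it is a finite-length generalized module whose composition factors are irreducible ordinary modules.

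For the reverse inclusion, take $M\in\KL_k(\fg)$ with composition series $0=N_0\subset N_1\subset\cdots\subset N_l=M$, whose factors are irreducible ordinary modules, hence among the $L(\widehat\lambda_p)$ by Theorem~\ref{thm:LW}. I would induct on $l$ and show that $M$ is in fact a direct sum of the $L(\widehat\lambda_p)$, hence ordinary. The case $l=1$ is trivial. For the inductive step, $N_{l-1}$ is semisimple, so the claim reduces to showing that $\Ext^1_{\KL_k(\fg)}(L(\widehat\lambda_p),L(\widehat\lambda_q))=0$ for all $p,q$.

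For $p=q$ this is Lemma~\ref{lem:extKL}. For $p\neq q$ I would argue by conformal weights or by central character. Consider an extension $0\to L(\widehat\lambda_q)\to E\to L(\widehat\lambda_p)\to 0$ in $\KL_k(\fg)$, and note the following.

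\begin{itemize}
\item If $h_p-h_q\notin\Z$, then $E$ splits as the direct sum of its $L(0)$-generalized eigenspace components over the two cosets $h_p+\Z$ and $h_q+\Z$. Each summand is a submodule, so the sequence splits.
\item If $h_p-h_q\in\Z$, then $L(0)$ still acts on $E$ through the Sugawara construction. Its semisimple part is $\frac{1}{2(k+h^\vee)}$ times the Casimir, and the Casimir takes the value $(\widehat\lambda+2\rh,\widehat\lambda)$ on $L(\widehat\lambda)$. In this case I would use the complete reducibility in $\Oinf_k(\fgt)$, Theorem~\ref{thm:gk}, as follows. Since $\widehat\lambda_p\neq\widehat\lambda_q$, the $\fht$-weights differ. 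Split $E$ into generalized $\fh(0)$-weight components, noting that $\fh(0)$ is locally finite on $E$. Then the subspace on which $\fh(0)$ acts semisimply is obtained from the nilpotent part, which is an endomorphism (as in Lemma~\ref{lem:Nend}, but now for all of $\fh$). This nilpotent part maps the $L(\widehat\lambda_p)$-quotient into $L(\widehat\lambda_q)$ as a module map $L(\widehat\lambda_p)\to L(\widehat\lambda_q)$, which is zero. Hence $\fh(0)$ acts semisimply on $E$.
\item Similarly, the nilpotent part of $L(0)$ induces a homomorphism $L(\widehat\lambda_p)\to L(\widehat\lambda_q)$, which vanishes, so $L(0)$ is semisimple on $E$.
\end{itemize}

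Thus $E$ is ordinary, and it splits by Theorem~\ref{cor:ordsemisimple}.

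The main obstacle is to make the last argument rigorous. The key point is that the nilpotent parts of $L(0)$ and of $z(0)$ are $L_{\fgt}(k,0)$-endomorphisms of $E$: this holds for $L(0)$ by the standard fact for generalized modules, and for $z(0)$ by Lemma~\ref{lem:Nend}. Each such nilpotent part $N$ kills $L(\widehat\lambda_q)$ and has image in $L(\widehat\lambda_q)$, because its action on each composition factor is nilpotent and semisimple at once, hence zero. So $N$ factors through a module map $L(\widehat\lambda_p)\to L(\widehat\lambda_q)$, which is zero by Schur's lemma since $p\neq q$.

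This argument handles all pairs $p\ne q$ uniformly, so the case split on $h_p-h_q$ is unnecessary. Combined with Lemma~\ref{lem:extKL} for $p=q$, it shows that all $\Ext^1$ groups between simple objects vanish. Induction on length then gives $\KL_k(\fg)=\mathcal{O}_{k}^{ord}(\fg)$, which is semisimple. Theorem~\ref{thm:CY} and \cite{CKM} then make it a braided tensor supercategory.
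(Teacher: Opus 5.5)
Your argument is correct, but it is organized differently from the paper's proof.

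\textbf{The paper's route.} The paper never computes $\Ext^1$ between distinct simples. Assuming $L(0)_n\neq0$ on $M\in\KL_k(\fg)$, it passes to $M'=L(0)_n^{r-2}M$. It then applies Theorem~\ref{cor:ordsemisimple} to $N=\Ker(L(0)_n|_{M'})$ and $W=M'/N$. Using the isomorphism $W\cong L(0)_nM'$ induced by $L(0)_n$, it cuts out a self-extension $0\to T\to E\to J\to0$ with $T\cong J$ on which $L(0)_n\neq0$. This contradicts Lemma~\ref{lem:extKL}; the construction is recycled from Lemma~\ref{prop:hiorder}.

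\textbf{Your route.} You run the standard induction on length. This requires $\Ext^1_{\KL_k(\fg)}$ to vanish for every pair of composition factors. You handle the non-isomorphic pairs by observing that $L(0)_n$ vanishes on both the sub and the quotient, so it factors through a homomorphism between non-isomorphic simples and is therefore zero. Hence $E$ is ordinary and splits by Theorem~\ref{cor:ordsemisimple}. That step is sound.

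Only $L(0)_n$ is needed there. The discussion of $z(0)$, $\fh(0)$, the Casimir, and the case split on $h_p-h_q$ can be dropped, because Theorem~\ref{cor:ordsemisimple} already contains the Cartan semisimplicity.

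\textbf{What each buys.} Your route proves directly, by a familiar homological argument, that every object of $\KL_k(\fg)$ is a finite direct sum of simples. The paper's route never meets the mixed case at all. Also, since $T\cong J$ is realized by the even map $L(0)_n$, it never has to distinguish a simple from its parity shift. In your induction, a factor paired with its parity shift must also be treated as a ``non-isomorphic'' pair. Your Schur argument does cover it: an even map from $L(\widehat\lambda_p)$ to its parity shift must kill the one-dimensional highest weight space, hence is zero.
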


\begin{proof}
Let $M\in\KL_k(\fg)$ and $L(0)=L(0)_s+L(0)_n$ be the Jordan-Chevalley
decomposition of $L(0)$ on $M$. For a
homogeneous element $v\in L_{\widehat{\mathfrak{g}}}(k,0)$ with conformal weight $\mathrm{wt}(v)$, $[L(0)_s,v_m]=(\mathrm{wt}(v)-m-1)v_m=[L(0),v_m]$,
hence $[L(0)_n,v_m]=0$ for all homogeneous $v$ and all $m$.
Then $L(0)_n\in\End_{L_{\widehat{\mathfrak{g}}}(k,0)}(M)$.
Since $M$
has finite length, similar to Lemma \ref{prop:hiorder}, $L(0)_n$ is nilpotent. Suppose $L(0)_n\neq0$, let $r\ge2$ be an integer such that $L(0)_n^r=0$ and
$L(0)_n^{r-1}\ne0$, and set
$M'=L(0)_n^{r-2}M$. Then $L(0)_n^2=0$ on $M'$ and $L(0)_n|_{M'}\neq0$. Set
$N=\Ker(L(0)_n|_{M'})$ and $W=M'/N$. Since $L(0)_n^2=0$ on $M'$, we have $L(0)_nM'\subseteq N$ and
$L(0)_n$ acts as zero on both $W$ and $N$, thus $L(0)$ acts semisimply on $W$ and $N$,
so $W$ and $N$ are ordinary $L_{\widehat{\mathfrak{g}}}(k,0)$-modules. By Theorem~\ref{cor:ordsemisimple}, $W$ and
$N$ are semisimple $L_{\widehat{\mathfrak{g}}}(k,0)$-modules.
As in the proof of Lemma~\ref{prop:hiorder}, the map
$\phi:W\to L(0)_nM'\subseteq N$, $\phi(y+N)=L(0)_ny$, is an isomorphism. Choose a simple
direct summand $J\subseteq W$ and set $T=\phi(J)\subseteq N$, then $T\cong J$ is
simple. Let $N=T\oplus N'$ and $E=\pi^{-1}(J)/N'$, where $\pi:M'\to W=M'/N$ is the canonical
map, we obtain a short exact sequence $0\longrightarrow T\longrightarrow E\longrightarrow J\longrightarrow0$
in $\KL_k(\fg)$ on which $L(0)_n$ acts on $E$ non-zero. By Theorem~\ref{thm:LW}, $T\cong J\cong L(\widehat\lambda_p)$ for some
$p$, so this is a non-trivial self-extension of the irreducible ordinary module
$L(\widehat\lambda_p)$, this contradicts with Lemma \ref{lem:extKL}. Thus $L(0)_n=0$, i.e., $L(0)$ is semisimple on
$M$, thus $M$ is ordinary.

Conversely, by Theorem~\ref{cor:ordsemisimple} every ordinary module is a finite
direct sum of irreducible ordinary modules, hence every ordinary module lies in $\KL_k(\fg)$. Thus
$\KL_k(\fg)=\mathcal{O}_{k}^{ord}(\fg)$, and by Theorem~\ref{thm:CY}, $\mathcal{O}_{k}^{ord}(\fg)$ is a braided tensor supercategory.
\end{proof}

\begin{rem}
{\em Note that for any level $k\ne-h^\vee$, every irreducible ordinary $L_{\widehat{\mathfrak{g}}}(k,0)$-module is $C_1$-cofinite \cite{CHY18}.
For any ordinary $L_{\widehat{\mathfrak{g}}}(k,0)$-module $M$, it is a finite direct sum of irreducible modules,
then $M$ is $C_1$-cofinite.
By \cite[Theorem~1.2]{Huang25} (see also \cite[Theorem~2.25]{CMOY}), the category $\mathcal{D}$ of $C_1$-cofinite grading-restricted generalized $L_{\widehat{\mathfrak{g}}}(k,0)$-modules
is a braided tensor supercategory with twist
$\theta=e^{2\pi iL(0)}$.
For any two irreducible ordinary modules $M_1,M_2$, denote by $M_1\boxtimes_{P(z)} M_2$ the $P(z)$-tensor product in $\mathcal{D}$.
Since $\mathcal{O}_{k}^{ord}(\fg)$ is finite and semisimple, from Corollary 3.8 of \cite{CY}, we have $M_1\boxtimes_{P(z)} M_2\in\KL_k(\fg)=\mathcal{O}_{k}^{ord}(\fg)$,
hence $\mathcal{O}_{k}^{ord}(\fg)$ is closed under the $P(z)$-tensor products.
Thus we also obtain that $\mathcal{O}_{k}^{ord}(\fg)$ is a braided tensor supercategory with twist
$\theta=e^{2\pi iL(0)}$.}
\end{rem}

Let $\mathcal{O}_{k}^{ord}(\fg)^{\mathrm{ev}}$ be the subcategory of $\mathcal{O}_{k}^{ord}(\fg)$ with the same objects and with parity-preserving homomorphisms as morphisms.
Since all structure morphisms of $\mathcal{O}_{k}^{ord}(\fg)$ are even \cite{CKM},
the even part $\mathcal{O}_{k}^{ord}(\fg)^{\mathrm{ev}}$
is a braided tensor category. By Theorem~\ref{cor:ordsemisimple}, $\mathcal{O}_{k}^{ord}(\fg)^{\mathrm{ev}}$ is also finite and semisimple.

\subsection{Rigidity}

Recall from \cite{BD} that an $r$-category is a monoidal category $\mathcal{C}$ such
that for every object $X\in\mathcal{C}$, the functor $Z\mapsto\Hom(Z\boxtimes X,\mathbf{1})$ is
representable by some object $X^{*}$, and the functor
$^{*}:\mathcal{C}\to\mathcal{C}^{\mathrm{mop}}$ (monoidal and arrow opposite) is an equivalence (cf. \cite{EP}).

\begin{prop}\label{prop:rcat}
Let $\fg$ be a basic classical Lie superalgebra except $D(2|1;a)$ and
let $k$ be a boundary admissible level of $\fgt$.
Then the braided tensor category $\mathcal{O}_{k}^{ord}(\fg)^{\mathrm{ev}}$ is an $r$-category.
\end{prop}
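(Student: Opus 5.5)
We plan to take $X^*$ to be the contragredient module $X'$ and to build the representing isomorphism from the standard identification of intertwining operators with homomorphisms out of tensor products. For every object $X$ of $\mathcal{O}_{k}^{ord}(\fg)^{\mathrm{ev}}$ we need a natural isomorphism
\[
\Hom(Z\boxtimes X,\mathbf{1})\cong\Hom(Z,X^*),
\]
where $\mathbf{1}=L_{\fgt}(k,0)$. We also need ${}^*$ to be an equivalence $\mathcal{C}\to\mathcal{C}^{\mathrm{mop}}$.

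\emph{Step 1: the dual is in the category.} By Theorem~\ref{cor:ordsemisimple} every object is a finite direct sum of the $L(\widehat\lambda_p)$. Since $L(0)$ is semisimple with finite-dimensional graded pieces, $X'=\bigoplus_\lambda X_\lambda^*$ is again an ordinary grading-restricted module, with the usual sign conventions in the super case. Hence $X'\in\mathcal{O}_{k}^{ord}(\fg)$. Because $(X')'\cong X$ canonically, $X\mapsto X'$ is an involutive contravariant equivalence, and in particular it is fully faithful and essentially surjective.

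\emph{Step 2: the representing isomorphism.} By the universal property of the $P(1)$-tensor product in $\KL_k(\fg)=\mathcal{O}_{k}^{ord}(\fg)$ (Theorem~\ref{thm:KL}), we have
\[
\Hom(Z\boxtimes X,\mathbf{1})\cong\mathcal{V}^{\mathbf{1}}_{Z\,X},
\]
the space of even intertwining operators of type $\binom{\mathbf{1}}{Z\,X}$. We then apply the symmetry of intertwining operators:
\[
\mathcal{V}^{\mathbf{1}}_{Z\,X}\cong\mathcal{V}^{X'}_{Z\,\mathbf{1}'}.
\]
This uses the $\Omega$/$A$-type transformations of Frenkel--Huang--Lepowsky, which carry over to the super setting with parity signs. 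Finally we need the self-duality $\mathbf{1}'\cong\mathbf{1}$. For this we use that $L_{\fgt}(k,0)$ is simple with one-dimensional weight-$0$ space, so it carries a nondegenerate invariant bilinear form and is self-dual. Since $\mathbf{1}$ is the unit, this gives
\[
\mathcal{V}^{X'}_{Z\,\mathbf{1}}\cong\Hom(Z\boxtimes\mathbf{1},X')\cong\Hom(Z,X').
\]
Everything here is parity-preserving, so it restricts to the even subcategory. Naturality in $Z$ follows because each identification commutes with precomposition by module maps.

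\emph{Step 3: monoidality of ${}^*$.} We will exhibit the natural isomorphism $(X\boxtimes Y)'\cong Y'\boxtimes X'$ compatible with associators. It suffices to check that both sides represent the same functor $Z\mapsto\Hom(Z\boxtimes X\boxtimes Y,\mathbf{1})$, using Step 2 twice together with the associator; Yoneda then gives the coherent isomorphism. This is the standard argument in \cite{EP}. In the semisimple finite setting one can alternatively observe that $\Hom(Z\boxtimes X,\mathbf{1})$ is computed summand by summand.

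The main obstacle is Step 2, specifically making the intertwining operator symmetry and the identification $\mathbf{1}'\cong\mathbf{1}$ rigorous in the super setting with the correct signs. We also need to check that the resulting isomorphism is natural and parity-preserving. We would handle the signs by working with the even intertwining operators throughout and citing \cite{CKM} for the super conventions.
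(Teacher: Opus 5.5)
Your Steps 1 and 2 are the paper's proof. The graded dual $X'$ is again ordinary, and $X''\cong X$ makes $X\mapsto X'$ an anti-equivalence. The chain $\Hom(Z\boxtimes X,\mathbf 1)\cong\Ic\binom{\mathbf 1}{Z\,X}\cong\Ic\binom{X'}{Z\,\mathbf 1'}\cong\Ic\binom{X'}{Z\,\mathbf 1}\cong\Hom(Z\boxtimes\mathbf 1,X')\cong\Hom(Z,X')$ is exactly the one the paper writes down. That already proves the proposition. In the Boyarchenko--Drinfeld notion of $r$-category, the duality functor only needs to be an equivalence of categories with arrows reversed, and the paper concludes this directly from $D^2\cong\mathrm{id}$.

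Step 3 is unnecessary, and it is also wrong as argued, so you should delete it. Applying Step 2 twice gives $\Hom(Z\boxtimes X\boxtimes Y,\mathbf 1)\cong\Hom(Z\boxtimes X,Y')$. To reach $\Hom(Z,Y'\boxtimes X')$ from there you need $\Hom(Z\boxtimes X,W)\cong\Hom(Z,W\boxtimes X')$ for the non-unit object $W=Y'$. Step 2 only supplies this for $W=\mathbf 1$.

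In an $r$-category the right adjoint of $-\boxtimes X$ is $W\mapsto(X\boxtimes W')'$. Identifying it with $-\boxtimes X'$ is precisely the adjunction that rigidity provides. So a coherent isomorphism $(X\boxtimes Y)'\cong Y'\boxtimes X'$ is essentially equivalent to rigidity, which is what Theorem~\ref{thm:EP} is later invoked to obtain. In a general $r$-category the duality transports $\boxtimes$ to a second, possibly different, monoidal product.

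The ``summand by summand'' remark does not repair this, since it gives no control over $\Hom(Z,Y'\boxtimes X')$. If Step 3 worked, you would have proved rigidity without Theorem~\ref{thm:EP}, which is a sign that it claims too much.
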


\begin{proof}
Set $\mathcal{C}=\mathcal{O}_{k}^{ord}(\fg)^{\mathrm{ev}}$.
For any ordinary $L_{\widehat{\mathfrak{g}}}(k,0)$-module $M=\bigoplus_{\lambda\in\mathbb{C}}M_{\lambda}$, let
$M'=\bigoplus_{\lambda\in\mathbb{C}}M_{\lambda}^{*}$ be the graded dual, then each $M_{\lambda}^{*}$ is
finite-dimensional and $L(0)$ acts semisimply on $M'$, hence $M'\in\mathcal{C}$.
From Proposition 5.3.1 of \cite{FHL93}, we have $M\cong M''$, so the contravariant functor
$D(M)=M'$ satisfies $D^2\cong\mathrm{id}_{\mathcal{C}}$.
Note that the unit object $L_{\widehat{\mathfrak{g}}}(k,0)$ is self-dual.
Let $\Ic\!\left(\begin{smallmatrix}M_3\\ M_1\ M_2\end{smallmatrix}\right)$
denote the space of parity-preserving intertwining operators of type
$\left(\begin{smallmatrix}M_3\\ M_1\ M_2\end{smallmatrix}\right)$,
where $M_1,M_2,M_3\in\mathcal{C}$. By the universal property of the vertex tensor product
\cite{HL95-1}, there is a natural isomorphism between
$\Hom_{\mathcal{C}}^0(M_1\boxtimes M_2,M_3)$
and $\Ic\!\left(\begin{smallmatrix}M_3\\ M_1\ M_2\end{smallmatrix}\right)$,
where $\Hom_{\mathcal{C}}^{0}(M_1\boxtimes M_2,M_3)$ is the space of parity-preserving homomorphisms from $M_1\boxtimes M_2$ to $M_3$.
Note that
$\Ic\!\left(\begin{smallmatrix}M_3\\ M_1\ M_2\end{smallmatrix}\right)
\cong\Ic\!\left(\begin{smallmatrix}M_2'\\ M_1\ M_3'\end{smallmatrix}\right)$.
Fix $M,N\in\mathcal{C}$, since $L_{\widehat{\mathfrak{g}}}(k,0)'\cong L_{\widehat{\mathfrak{g}}}(k,0)$ and
$N\boxtimes L_{\widehat{\mathfrak{g}}}(k,0)\cong N$, we have
\begin{align*}
 \Hom_{\mathcal{C}}^0(N\boxtimes M,L_{\widehat{\mathfrak{g}}}(k,0))
&\cong\Ic\!\left(\begin{smallmatrix}L_{\widehat{\mathfrak{g}}}(k,0)\\ N\ M\end{smallmatrix}\right)
\cong\Ic\!\left(\begin{smallmatrix}M'\\ N\ L_{\widehat{\mathfrak{g}}}(k,0)'\end{smallmatrix}\right)
\cong\Ic\!\left(\begin{smallmatrix}M'\\ N\ L_{\widehat{\mathfrak{g}}}(k,0)\end{smallmatrix}\right)\\
&\cong\Hom_{\mathcal{C}}^0(N\boxtimes L_{\widehat{\mathfrak{g}}}(k,0),M')
\cong\Hom_{\mathcal{C}}^0(N,M').
\end{align*}
Hence the functor
$N\mapsto\Hom_{\mathcal{C}}^0(N\boxtimes M,L_{\widehat{\mathfrak{g}}}(k,0))$ is represented by $M'$. Since
$D^2\cong\mathrm{id}_{\mathcal{C}}$, the contravariant functor $D$ is an equivalence, thus $\mathcal{C}$ is an $r$-category.
\end{proof}

\begin{thm}[\cite{EP}]\label{thm:EP}
Every semisimple braided $r$-category of moderate growth is rigid.
\end{thm}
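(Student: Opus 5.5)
Since this is the cited result of \cite{EP}, the plan below reconstructs its proof in outline rather than something we would re-derive in full here. Let $\mathcal{C}$ be a semisimple braided $r$-category with unit $\mathbf{1}$, duality functor $D=(\ )^{*}$, and natural isomorphisms $\Hom(Z\boxtimes X,\mathbf{1})\cong\Hom(Z,X^{*})$. Setting $Z=X^{*}$ and taking the image of $\mathrm{id}_{X^*}$ gives a canonical evaluation $ev_X:X^{*}\boxtimes X\to\mathbf{1}$; this always exists. The whole task is to produce a coevaluation $coev_X:\mathbf{1}\to X\boxtimes X^{*}$ satisfying the two zigzag identities.

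By semisimplicity, and because $D$ is an additive contravariant equivalence, it suffices to treat a simple object $X$. Indeed, rigidity is compatible with finite direct sums, and $X^{*}$ is again simple. For simple $X$ we would carry out three steps.

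\begin{enumerate}
\item Show that $\mathbf{1}$ occurs as a direct summand of $X\boxtimes X^{*}$, i.e.\ $\Hom(\mathbf{1},X\boxtimes X^{*})\neq0$.
\item Choose a nonzero $c:\mathbf{1}\to X\boxtimes X^{*}$.
\item Consider the zigzag composite
\[
X\longrightarrow X\boxtimes X^{*}\boxtimes X\longrightarrow X,
\]
built from $c$ and $ev_X$. By Schur's lemma it is a scalar $t$. If $t\neq0$, rescaling $c$ gives the first zigzag identity. The second identity then follows formally from the first, using the universal property of $X^{*}$ (uniqueness of the map representing a given pairing).
\end{enumerate}

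So the argument reduces to existence in Step 1 and nonvanishing of $t$ in Step 3.

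For both points we would pass to the fusion data. Put $N_{XY}^{Z}=\dim\Hom(X\boxtimes Y,Z)$ on simple objects. The $r$-category isomorphism gives $N_{XY}^{\mathbf{1}}=\delta_{Y,X^{*}}$. We would then use the braiding, together with $D^{2}\cong\mathrm{id}$ and the isomorphisms of the form $\Hom(X\boxtimes Y,Z)\cong\Hom(X,(Y\boxtimes Z^{*})^{*})$ coming from the $r$-structure, to establish the symmetry relations $N_{XY}^{Z}=N_{Z^{*}X}^{Y^{*}}$ and so on. These make the Grothendieck ring $K(\mathcal{C})$ a $\Z_{\ge0}$-based ring with involution $X\mapsto X^{*}$.

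Moderate growth says that the length of $X^{\boxtimes n}$ grows at most exponentially in $n$. This lets us define a Frobenius--Perron type dimension $d(X)=\lim_{n\to\infty}\ell(X^{\boxtimes n})^{1/n}$, or the Perron--Frobenius eigenvalue of left multiplication by $X$ on the span of constituents, which is finite. Nonnegativity of the structure constants, combined with the trace identity coming from $N_{XY}^{\mathbf{1}}=\delta_{Y,X^{*}}$, then forces $[X\boxtimes X^{*}:\mathbf{1}]=1$. This gives Step 1.

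The main obstacle is Step 3: showing the zigzag scalar $t$ is nonzero. A bare Grothendieck-ring computation does not see this, and it is exactly where braiding and moderate growth must interact. What we would try first is to argue by contradiction. If $t=0$, then $c$ and $ev_X$ compose to zero, so $X$ fails to be rigid. In a Grothendieck--Verdier category this forces the tensor powers of $X\boxtimes X^{*}$ to acquire constituents whose number grows super-exponentially, for instance via a nilpotent-ideal or negligible-morphism argument on $\End(X^{\boxtimes n}\boxtimes X^{*\boxtimes n})$ using braid group actions. That would contradict moderate growth.

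If this direct route becomes unwieldy, the fallback is to quote the precise statement of \cite{EP} and verify only its hypotheses in our setting. Semisimplicity is Theorem~\ref{cor:ordsemisimple}, the $r$-structure is Proposition~\ref{prop:rcat}, and moderate growth is automatic in a finite semisimple category, since the lengths of tensor powers are bounded by powers of the largest eigenvalue of the finite fusion matrix.
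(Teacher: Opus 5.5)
The paper gives no proof of this statement. It is quoted from \cite{EP} and used as a black box in Theorem~\ref{thm:rigid}. Your fallback, citing \cite{EP} and checking its hypotheses, is therefore exactly the paper's treatment. Your last paragraph (semisimplicity from Theorem~\ref{cor:ordsemisimple}, the $r$-structure from Proposition~\ref{prop:rcat}, moderate growth from finiteness) is really part of the proof of Theorem~\ref{thm:rigid}, not of this statement.

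As an independent reconstruction, your reductions are sound:
\begin{itemize}
\item Rigidity passes to finite direct sums.
\item The second zigzag identity does follow from the first by the universal property of $X^{*}$. For the two zigzag composites $f\in\End(X)$ and $g\in\End(X^{*})$, the interchange law gives $ev_X\circ(g\boxtimes\mathrm{id}_X)=ev_X\circ(\mathrm{id}_{X^*}\boxtimes f)$.
\item Step~1 is immediate and needs neither Frobenius--Perron theory nor moderate growth. Since $\Hom(X^{*}\boxtimes X,\mathbf{1})\cong\End(X^{*})=\C$, in a semisimple category with simple unit $\mathbf{1}$ is a summand of multiplicity one in $X^{*}\boxtimes X\cong X\boxtimes X^{*}$.
\end{itemize}
Separately, your claim that $X\mapsto X^{*}$ makes $K(\mathcal C)$ a based ring would require $[(X\boxtimes Y)^{*}]=[Y^{*}\boxtimes X^{*}]$. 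That is a rigidity-type property you have not justified, but you do not need it.

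The genuine gap is Step~3. For simple $X$ here, $t\neq0$ is equivalent to rigidity of $X$: if $X$ is rigid, its coevaluation spans the one-dimensional space $\Hom(\mathbf{1},X\boxtimes X^{*})$ and has zigzag scalar $1$. So this step is the entire theorem, and it is the only place where the braiding and moderate growth can enter. What you offer there is not an argument. You do not specify which morphisms in $\End(X^{\boxtimes n}\boxtimes X^{*\boxtimes n})$ you would construct, why $t=0$ would make them independent or force additional summands, or how the braid group action is used.

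Moreover, a contradiction with moderate growth is a statement about lengths of tensor powers, i.e.\ about fusion multiplicities. As you note yourself, $t$ is invisible in $K(\mathcal C)$. So you need a concrete mechanism turning the morphism-level condition $t=0$ into a super-exponential lower bound on the length of $Y^{\boxtimes n}$ for some object $Y$. Supplying that mechanism is precisely the content of \cite{EP}. Without it, your proposal reduces to citing \cite{EP}, just as the paper does.
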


\begin{thm}\label{thm:rigid}
Let $\fg$ be a basic classical Lie superalgebra except $D(2|1;a)$ and let
$L_{\widehat{\mathfrak{g}}}(k,0)$ be the simple affine vertex operator superalgebra with
boundary admissible level $k$. Then the
category $\mathcal{O}_{k}^{ord}(\fg)$ of ordinary $L_{\widehat{\mathfrak{g}}}(k,0)$-modules is a finite semisimple rigid braided tensor supercategory with twist
$\theta=e^{2\pi iL(0)}$.
\end{thm}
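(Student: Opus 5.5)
The plan is to assemble the theorem from Theorem~\ref{cor:ordsemisimple}, Theorem~\ref{thm:KL}, Proposition~\ref{prop:rcat} and Theorem~\ref{thm:EP}. The only new work is checking the hypotheses of \cite{EP} and transferring rigidity from the even part back to the supercategory.

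Finiteness and semisimplicity of $\mathcal{O}_{k}^{ord}(\fg)$ are Theorem~\ref{cor:ordsemisimple}. The braided tensor supercategory structure comes from Theorem~\ref{thm:KL}, via the identification $\mathcal{O}_{k}^{ord}(\fg)=\KL_k(\fg)$ and Theorem~\ref{thm:CY}. The twist $\theta=e^{2\pi iL(0)}$ is as in the subsequent remark. It satisfies the balancing axiom $\theta_{M_1\boxtimes M_2}=c_{M_2,M_1}c_{M_1,M_2}(\theta_{M_1}\boxtimes\theta_{M_2})$, which is the standard consequence of the $L(0)$-conjugation property of intertwining operators together with the definition of the braiding through monodromy. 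So the remaining issue is rigidity.

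For rigidity, work in $\mathcal{C}=\mathcal{O}_{k}^{ord}(\fg)^{\mathrm{ev}}$. This category is braided and semisimple, since every ordinary module is a finite direct sum of the $L(\widehat\lambda_p)$ and the parity-shifted versions of these modules. By Proposition~\ref{prop:rcat} it is an $r$-category.

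To apply Theorem~\ref{thm:EP}, I must verify \emph{moderate growth}: for each object $X$, the length of $X^{\boxtimes n}$ grows at most exponentially in $n$. I expect this to be the main point requiring care, and I would argue it as follows. There are finitely many simple objects up to isomorphism and parity shift, namely $2u$ or $2$ of them. For simples $S_i,S_j$, the product $S_i\boxtimes S_j$ has finite length $N$ bounded by some constant $C$. This holds because fusion rules are finite-dimensional: $\dim\Ic\left(\begin{smallmatrix}S_l\\ S_i\ S_j\end{smallmatrix}\right)<\infty$, by $C_1$-cofiniteness or by the finite-dimensionality of Hom spaces in a finite category. By semisimplicity and distributivity of $\boxtimes$ over direct sums, the length of $X^{\boxtimes n}$ is then at most $\ell(X)^n C^{n-1}$, which is exponential. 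Theorem~\ref{thm:EP} therefore gives that $\mathcal{C}$ is rigid.

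Finally, transfer rigidity to the supercategory. $\mathcal{O}_{k}^{ord}(\fg)$ and $\mathcal{C}$ have the same objects and the same tensor product, and every structure morphism is even \cite{CKM}. Hence the evaluation and coevaluation maps $e_M:M'\boxtimes M\to L_{\fgt}(k,0)$ and $i_M:L_{\fgt}(k,0)\to M\boxtimes M'$ constructed in $\mathcal{C}$ are morphisms of $\mathcal{O}_{k}^{ord}(\fg)$. The zig-zag identities hold there because they are identities between even morphisms, and composition is the same in both categories. A dual on the other side comes from the braiding, or equivalently from the twist. So $\mathcal{O}_{k}^{ord}(\fg)$ is a finite semisimple rigid braided tensor supercategory with twist $e^{2\pi iL(0)}$, which is the statement of the theorem.
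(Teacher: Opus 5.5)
Your proposal is correct and follows the paper's proof. You pass to the even subcategory $\mathcal{O}_{k}^{ord}(\fg)^{\mathrm{ev}}$, obtain rigidity there from Proposition~\ref{prop:rcat} and Theorem~\ref{thm:EP}, and transfer it back because all structure morphisms are even. Where the paper asserts two steps in one line, you spell them out: the bound $\ell(X^{\boxtimes n})\le \ell(X)^n C^{n-1}$ giving moderate growth, and the fact that the even evaluation and coevaluation maps satisfy the zig-zag identities in the supercategory as well.
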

\begin{proof}
Note that $\mathcal{O}_{k}^{ord}(\fg)^{\mathrm{ev}}$ is a finite semisimple braided tensor category with twist
$\theta=e^{2\pi iL(0)}$.
By Proposition~\ref{prop:rcat},
$\mathcal{O}_{k}^{ord}(\fg)^{\mathrm{ev}}$ is an $r$-category.
Since the category $\mathcal{O}_{k}^{ord}(\fg)^{\mathrm{ev}}$ is finite and semisimple, from \cite{EP},
it is a category of moderate growth. By Theorem~\ref{thm:EP}, $\mathcal{O}_{k}^{ord}(\fg)^{\mathrm{ev}}$ is rigid,
thus $\mathcal{O}_{k}^{ord}(\fg)$ is rigid.
\end{proof}

Recall that a fusion category is a finite semisimple rigid tensor category
and a ribbon category is a rigid braided tensor category equipped with a ribbon twist \cite{EGNO15}.
From Theorem 4.1 of \cite{H08}, the twist
$\theta=e^{2\pi iL(0)}$ in the
category $\mathcal{O}_{k}^{ord}(\fg)$ is a ribbon twist. Thus we have the following corollary.

\begin{cor}
Let $\fg$ be a basic classical Lie superalgebra except $D(2|1;a)$ and let
$L_{\widehat{\mathfrak{g}}}(k,0)$ be the simple affine vertex operator superalgebra with
boundary admissible level $k$. Then the
category $\mathcal{O}_{k}^{ord}(\fg)$ is a ribbon fusion supercategory.
\end{cor}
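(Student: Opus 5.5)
This corollary is a packaging of results already established, so the plan is to check the definitions one at a time. By \cite{EGNO15}, a ribbon fusion supercategory is a finite semisimple rigid braided tensor supercategory equipped with a ribbon twist. Theorem~\ref{cor:ordsemisimple} gives that $\mathcal{O}_{k}^{ord}(\fg)$ is finite and semisimple. Theorem~\ref{thm:KL} gives the braided tensor supercategory structure, via the identification $\mathcal{O}_{k}^{ord}(\fg)=\KL_k(\fg)$ and Theorem~\ref{thm:CY}. Theorem~\ref{thm:rigid} gives rigidity, and it also supplies the natural twist $\theta=e^{2\pi iL(0)}$. It therefore remains only to check that $\theta$ is a ribbon twist.

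The balancing axiom $\theta_{M_1\boxtimes M_2}=c_{M_2,M_1}\circ c_{M_1,M_2}\circ(\theta_{M_1}\boxtimes\theta_{M_2})$ and $\theta_{\mathbf 1}=\mathrm{id}$ follow from the vertex tensor category construction, as in \cite{H08}. Concretely, they come from the $L(-1)$-derivative property and the $L(0)$-conjugation formula for intertwining operators, combined with the definition of the braiding via analytic continuation along a half monodromy. Since all structure morphisms are even \cite{CKM}, the same computation applies verbatim in the super setting, with the usual parity signs absorbed into the braiding.

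The one condition needing an actual argument, and the step I expect to be the main obstacle, is compatibility with duality, $(\theta_M)^*=\theta_{M^*}$, where the dual is the contragredient $M'$ from Proposition~\ref{prop:rcat}. My plan is as follows.
\begin{itemize}
\item The contragredient action satisfies $\langle L(0)w',w\rangle=\langle w',L(0)w\rangle$, so $e^{2\pi iL(0)}$ on $M'$ is the transpose of $e^{2\pi iL(0)}$ on $M$.
\item This requires that the evaluation and coevaluation realizing rigidity are exactly those induced from the natural pairing. This holds because the representing object in Proposition~\ref{prop:rcat} was $M'$, with the isomorphism built from the pairing.
\item By semisimplicity it suffices to check this on the simple objects $L(\widehat\lambda_p)$. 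There $\theta$ acts by the scalar $e^{2\pi i h_p}$ on $L(\widehat\lambda_p)$ and on its dual, because $L(\widehat\lambda_p)'$ is a simple ordinary module whose conformal weights coincide with those of $L(\widehat\lambda_p)$.
\end{itemize}
This is precisely the content of Theorem 4.1 of \cite{H08}. Having checked these conditions, I would conclude that $\mathcal{O}_{k}^{ord}(\fg)$ is a ribbon fusion supercategory.
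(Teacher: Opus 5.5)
Your proposal is correct and follows the same route as the paper. The paper obtains the corollary by combining the finite semisimple rigid braided tensor supercategory structure with twist $\theta=e^{2\pi iL(0)}$ (Theorems~\ref{cor:ordsemisimple}, \ref{thm:KL}, \ref{thm:rigid}) with Theorem 4.1 of \cite{H08} for the ribbon property of $\theta$; your explicit check of $(\theta_M)^*=\theta_{M^*}$ on simple objects is a sound supplement to that citation, since there $\theta$ is the scalar $e^{2\pi i h_p}$ and the dual $M'$ has the same conformal weights.
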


\section*{Acknowledgements}
We would like to thank Maria Gorelik for valuable discussions on the semisimplicity of a Cartan subalgebra action on ordinary $L_{\widehat{\mathfrak{g}}}(k,0)$-modules.
We also thank Jinwei Yang for drawing our attention to the paper \cite{EP}.
Qing Wang is supported by China NSF grant No. 12571033.

\end{document}